\theoremstyle{definition}
\newtheorem{thm}{Theorem}[section]
 \newtheorem{prop}[thm]{Proposition}
 \theoremstyle{definition}
 \newtheorem{defn}[thm]{Definition}
 \theoremstyle{remark}
 \newtheorem{rem}[thm]{Remark}
 \numberwithin{equation}{section}
\def\D{\displaystyle}
\numberwithin{equation}{section}
\DeclareMathOperator{\ord}{ord} \DeclareMathOperator{\Card}{Card}
 \DeclareMathOperator{\rk}{rk}
\DeclareMathOperator{\trdeg}{tr.deg}
\DeclareMathOperator{\Ker}{Ker}
\begin{document}

\title[Dimension Polynomials and the Einstein's Strength] {Dimension
Polynomials and the Einstein's Strength of Some Systems of
Quasi-linear Algebraic Difference\\ Equations}

\thanks{This work was completed with the support of the NSF grant
CCF-1714425.}

\author{Alexander Evgrafov}
\address{Department of Analytical, Physical and Colloid Chemistry,
Sechenov First Moscow State Medical University, Moscow 119991,
Russia} \email{afkx\_farm@mail.ru}

\author{Alexander Levin}
\address{Department of Mathematics, The Catholic University of
America, Washington, D.C. 20064}

\email{levin@cua.edu}

\subjclass[2000]{Primary 12H10; Secondary 39A10, 35K57}

\date\today

\keywords{Reaction-diffusion, Difference scheme, Difference
polynomial, Characteristic set, Einstein's strength}

\begin{abstract}
In this paper we present a method of characteristic sets for
inversive difference polynomials and apply it to the analysis of systems of
quasi-linear algebraic difference equations. We describe characteristic sets
and compute difference dimension polynomials associated with some such systems.
Then we apply our results to the comparative analysis of difference schemes for some
PDEs from the point of view of their Einstein's strength. In
particular, we determine the Einstein's strength of standard
finite-difference schemes for the Murray, Burgers and some other
reaction-diffusion equations.

\end{abstract}

\maketitle
\section{Introduction}

Difference dimension polynomials, first introduced in \cite{Levin1}
and \cite{Levin2}, can be viewed as difference algebraic
counterparts of Hilbert polynomials in commutative algebra and
algebraic geometry, as well as of differential dimension polynomials
in differential algebra. Difference dimension polynomials and their
invariants are power tools for the study of difference and inversive
difference field extensions, systems of algebraic difference
equations, difference and inversive difference rings and modules
(see, for example, \cite[Ch. 6, 7]{KLMP}, \cite{Levin1}, \cite{Levin2},
\cite{Levin3}, \cite{LM1}, and \cite[Sect. 3.6, 4.6]{Levin4}).
Moreover, difference dimension polynomials play a significant role in
the qualitative theory of difference equations, because the difference
dimension polynomial of a system of algebraic difference equations
expresses the Einstein's strength of the system (see \cite[Chapter
7]{Levin4} for a detailed description of this concept).

In this paper we present a method of characteristic sets for
inversive difference polynomials and apply it for the computation of
difference dimension polynomials associated with difference schemes
for some systems of quasi-linear algebraic PDEs. The characteristics
sets of inversive difference polynomials were introduced in
\cite[Section 3.4]{KLMP}; their study was continued and extended to
the case of several term orderings in \cite[Section 2.3]{Levin4} and
\cite{Levin5}. A similar theory for non-inversive difference
polynomials was developed in \cite[Section 3.4]{KLMP}, \cite{GZ},
\cite{GLY}, \cite{GYZ} and some other works.

Hitherto, algorithmic methods for computing difference dimension
polynomials (and therefore for determining the Einstein's strength)
have been developed just for systems of linear difference equations.
This work provides methods of computation of difference dimension
polynomials for essentially wider class of systems of difference
equations. We prove the results on characteristic sets of difference
ideals generated by quasi-linear difference polynomials that allow
one to determine the Einstein's strength of important non-linear
systems. In particular, we determine the strengths of systems of
partial difference equations that arise from such schemes for
reaction-diffusion PDEs. These equations play the key role in the
theoretical foundation of the main methods for accurate, and rapid
determination of biologically active organic carboxylic acids in
objects such as infusion solutions and blood preservatives (see
\cite{Evgrafof} for the description of the corresponding
techniques).

\section{Preliminaries}

Throughout the paper, $\mathbb{N}, \mathbb{Z}$, $\mathbb{Q}$, and
$\mathbb{R}$ denote the sets of all non-negative integers, integers,
rational numbers, and real numbers, respectively. The number of
elements of a set $A$ is denoted by $\Card A$. As usual, $\mathbb
{Q}[t]$ denotes the ring of polynomials in one variable $t$ with
rational coefficients. All fields considered in the paper are supposed
to be of characteristic zero. Every ring homomorphism
is unitary (maps unity onto unity), every subring of a ring contains
the unity of the ring.

\smallskip

If $B = A_{1}\times\dots\times A_{k}$ is a Cartesian product of $k$
ordered sets with orders $\leq_{1},\dots \leq_{k}$, respectively
($k\in \mathbb{N}$, $k\geq 1$), then by the product order on $B$ we
mean a partial order $\leq_{P}$ such that $(a_{1},\dots,
a_{k})\leq_{P}(a'_{1},\dots, a'_{k})$ if and only if
$a_{i}\leq_{i}a'_{i}$ for $i=1,\dots, k$. In particular, if $a =
(a_{1},\dots, a_{k}), \,a'=(a'_{1},\dots, a'_{k})\in
\mathbb{N}^{k}$, then $a\leq_{P}a'$ if and only if $a_{i}\leq
a'_{i}$ for $i=1,\dots, k$. We write $a <_{P}a'$ if $a\leq_{P}a'$
and $a\neq a'$. The lexicographic order on $\mathbb{N}^{k}$ is denoted by
$\leq_{lex}$. If it is strict, we use the symbol $<_{lex}$.

\smallskip

In this section we present some background material needed for the
rest of the paper.

\smallskip

{\bf 2.1.\, Numerical polynomials of subsets of $\mathbb{N}^{m}$ and
$\mathbb{Z}^{m}$}.

\begin{defn}
A polynomial $f(t)$ in one variable $t$ with rational coefficients is called
numerical if $f(r)\in \mathbb{Z}$ for all sufficiently large $r\in\mathbb{Z}$.
\end{defn}

Of course, every polynomial with integer coefficients is numerical.
As an example of a numerical polynomial with non-integer
coefficients one can consider a polynomial $\D{t\choose k}$ \, where
$k\in\mathbb{N}$. (As usual, $\D{t\choose k}$ ($k\geq 1$) denotes
the polynomial $\D\frac{t(t-1)\dots (t-k+1)}{k!}$, $\D{t\choose0} =
1$, and $\D{t\choose k} = 0$ if $k < 0$.)

The following theorem proved in ~\cite[Chapter 0, section
17]{Kolchin} gives the ``canonical'' representation of a numerical
polynomial.

\begin{thm}
Let $f(t)$ be a numerical polynomial of degree $d$. Then $f(t)$ can
be represented in the form

\begin{equation}
f(t) =\D\sum_{i=0}^{d}a_{i}{{t+i}\choose i}
\end{equation}
with uniquely defined integer coefficients $a_{i}$.
\end{thm}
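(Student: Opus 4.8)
The plan is to split the statement into two parts: the existence and uniqueness of a representation $f(t) = \sum_{i=0}^{d} a_i \binom{t+i}{i}$ with \emph{rational} coefficients, which is pure linear algebra, and the assertion that these coefficients are in fact \emph{integers}, which is the substantive point. For the first part I would note that the polynomials $\binom{t+i}{i}$, $i = 0, \dots, d$, have pairwise distinct degrees $0, 1, \dots, d$, hence are linearly independent over $\mathbb{Q}$, and, being $d+1$ in number, form a basis of the $(d+1)$-dimensional space of polynomials in $\mathbb{Q}[t]$ of degree $\le d$. Thus every such $f$ has a unique expansion of the stated shape with $a_i \in \mathbb{Q}$ (and $a_d \ne 0$ when $\deg f = d$); uniqueness of the theorem's coefficients is then immediate, so only integrality remains to be shown.

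For integrality I would argue by induction on $d$, using the difference operator $\Delta f(t) = f(t+1) - f(t)$. The two ingredients are: (i) if $f$ is numerical then so is $\Delta f$, since for all sufficiently large integers $r$ it is a difference of integers; and (ii) by Pascal's identity $\Delta \binom{t+i}{i} = \binom{t+i}{i-1} = \binom{(t+1)+(i-1)}{i-1}$ for $i \ge 1$, so that applying $\Delta$ term by term to the expansion of $f$ (the constant term $a_0$ disappearing) yields $g(t+1)$, where $g(t) = \sum_{j=0}^{d-1} a_{j+1}\binom{t+j}{j}$ is again a polynomial in canonical form, of degree $d-1$. Since $\Delta f$ is numerical, so is $g(t+1)$, and hence so is $g$, because replacing $t$ by $t+1$ does not affect being numerical. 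The induction hypothesis applied to $g$ then yields $a_1, \dots, a_d \in \mathbb{Z}$; and evaluating at any sufficiently large integer $r$ gives $a_0 = f(r) - \sum_{i=1}^{d} a_i \binom{r+i}{i} \in \mathbb{Z}$, since $f(r) \in \mathbb{Z}$ and each $\binom{r+i}{i} \in \mathbb{Z}$. The base case $d = 0$ is the trivial observation that a numerical constant equals its value at a large integer.

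The step I expect to demand the most care is ingredient (ii): keeping the index shifts in Pascal's rule straight so that $\Delta$ genuinely carries the canonical basis into a shifted copy of itself, which is exactly the structural fact that makes the induction close; the surrounding linear algebra and the evaluation step for $a_0$ are routine. It is also worth emphasizing why the theorem is not a triviality — solving for the $a_i$ from $d+1$ interpolation conditions over $\mathbb{Q}$ produces only rational numbers a priori, and it is precisely the numerical hypothesis, propagated through $\Delta$, that upgrades them to integers.
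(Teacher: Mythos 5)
Your proof is correct and complete: the binomial polynomials $\binom{t+i}{i}$ of degrees $0,\dots,d$ do form a $\mathbb{Q}$-basis of the space of polynomials of degree at most $d$, and your induction via the difference operator $\Delta$ together with the identity $\Delta\binom{t+i}{i}=\binom{(t+1)+(i-1)}{i-1}$ correctly upgrades the rational coefficients to integers, with the evaluation step recovering $a_{0}$. The paper itself gives no proof, deferring to Kolchin (Ch.~0, \S 17), and the argument there is essentially the one you give, so your proposal matches the standard approach the paper relies on.
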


In what follows (until the end of the section), we deal with subsets of
$\mathbb{Z}^{m}$ ($m$ is a positive integer). If $a = (a_{1},\dots, a_{m})\in \mathbb{Z}^{m}$,
then the number $\sum_{i=1}^{m}|a_{i}|$ will be called the {\em order} of
the $m$-tuple $a$; it is denoted by $\ord a$. Furthermore, the
set $\mathbb{Z}^{m}$ will be considered as the union
\begin{equation}
\mathbb{Z}^{m} = \bigcup_{1\leq j\leq 2^{m}}\mathbb{Z}_{j}^{(m)}
\end{equation}
where $\mathbb{Z}_{1}^{(m)}, \dots, \mathbb{Z}_{2^{m}}^{(m)}$ are
all distinct Cartesian products of $m$ sets each of which is either
$\mathbb{N}$ or $\mathbb{Z}_{-}=\{a\in \mathbb{Z}|a\leq 0\}$. We
assume that $\mathbb{Z}_{1}^{(m)} = \mathbb{N}^{m}$ and call
$\mathbb{Z}_{j}^{(m)}$ the {\em $j$th orthant} of the set
$\mathbb{Z}^{m}$ ($1\leq j\leq 2^{m}$).

The set $\mathbb{Z}^{m}$ will be considered as a partially ordered set with
the order $\unlhd$ defined as follows: $(e_{1},\dots, e_{m})\unlhd (e'_{1},\dots, e'_{m})$ if
and only if the $m$-tuples $(e_{1},\dots, e_{m})$ and $(e'_{1},\dots, e'_{m})$ belong to the
same orthant $\mathbb{Z}_{k}^{(m)}$ and the $m$-tuple $(|e_{1}|,\dots, |e_{m}|)$
is less than $(|e'_{1}|,\dots, |e'_{m}|)$ with respect to the product order on $\mathbb{N}^{m}$.

If $A\subseteq \mathbb{Z}^{m}$, then $W_{A}$
will denote the set of all elements of $\mathbb{Z}^{m}$ that do not
exceed any element of $A$ with respect to the order $\unlhd$. (Thus,
$w\in W_{A}$ if and only if there is no $a\in A$ such that $a\unlhd
w$.) Furthermore, for any $r\in \mathbb{N}$, $A(r)$ will denote the
set of all elements $x = (x_{1},\dots, x_{m})\in A$ such that $\ord
x\leq r$.

\smallskip

The above notation can be naturally applied to subsets of
$\mathbb{N}^{m}$ (treated as subsets of $\mathbb{Z}^{m}$). If
$E\subseteq \mathbb{N}^{m}$ and $s\in \mathbb{N}$, then $E(s)$ will
denote the set of all $m$-tuples $e = (e_{1},\dots, e_{m})\in E$
such that $\ord e\leq s$. Furthermore, we shall associate with a set
$E\subseteq \mathbb{N}^{m}$ the set $V_{E}\subseteq \mathbb{N}^{m}$
that consists of all $m$-tuples $v = (v_{1},\dots , v_{m})\in
\mathbb{N}^{m}$ that are not greater than or equal to any $m$-tuple
from $E$ with respect to the product order on $\mathbb{N}^{m}$.
(Clearly, an element $v=(v_{1}, \dots , v_{m})\in \mathbb{N}^{m}$
belongs to $V_{E}$ if and only if for any element  $(e_{1},\dots ,
e_{m})\in E$, there exists $i\in \mathbb{N}, 1\leq i\leq m$, such
that $e_{i} > v_{i}$.)

The following two theorems proved, respectively, in \cite[Chapter 0,
section 17]{Kolchin} and \cite[Chapter 2]{KLMP} introduce certain
numerical polynomials associated with subsets of $\mathbb{N}^{m}$
and give explicit formulas for the computation of these polynomials.

\begin{thm}
Let $E$ be a subset of $\mathbb{N}^{m}$. Then there exists a
numerical polynomial $\omega_{E}(t)$ with the following properties:

\smallskip

{\em (i)} \,  $\omega_{E}(r) = \Card V_{E}(r)$ for all sufficiently
large $r\in \mathbb{N}$.

\smallskip

{\em (ii)} \, $\deg\omega_{E}$ does not exceed $m$ and
$\deg\omega_{E} = m$ if and only if $E=\emptyset$. In the last case,
$\omega_{E}(t) = \D{t+m\choose m}$.
\end{thm}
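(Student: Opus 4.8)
The plan is to reduce to a finite set $E$ and then count by inclusion--exclusion over shifted orthants. First I would dispose of the case $E=\emptyset$: then $V_{E}=\mathbb{N}^{m}$, and since the number of $m$-tuples in $\mathbb{N}^{m}$ of order at most $r$ equals $\binom{r+m}{m}$ (adjoin a slack coordinate to turn the condition ``sum at most $r$'' into ``sum equal to $r$''), we get $\Card V_{E}(r)=\binom{r+m}{m}$ for all $r$, so $\omega_{E}(t)=\binom{t+m}{m}$, a polynomial of degree exactly $m$. This settles one direction of the equivalence in (ii).

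For nonempty $E$ I would invoke Dickson's lemma: the set of minimal elements of $E$ with respect to the product order on $\mathbb{N}^{m}$ is finite, say $\{e_{1},\dots,e_{n}\}$ with $n\ge 1$, and an $m$-tuple is $\geq$ some element of $E$ precisely when it is $\geq$ some $e_{j}$; hence $V_{E}=V_{\{e_{1},\dots,e_{n}\}}$, and we may assume $E=\{e_{1},\dots,e_{n}\}$ is finite. Then $\mathbb{N}^{m}\setminus V_{E}=\bigcup_{j=1}^{n}(e_{j}+\mathbb{N}^{m})$, and for every nonempty $S\subseteq\{1,\dots,n\}$ one has $\bigcap_{j\in S}(e_{j}+\mathbb{N}^{m})=e_{S}+\mathbb{N}^{m}$, where $e_{S}$ is the coordinatewise maximum of the $e_{j}$ for $j\in S$. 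Since $\ord(e_{S}+b)=\ord e_{S}+\ord b$ for $b\in\mathbb{N}^{m}$, the number of elements of $e_{S}+\mathbb{N}^{m}$ of order at most $r$ equals $\binom{r-\ord e_{S}+m}{m}$ once $r\ge\ord e_{S}$. Inclusion--exclusion then gives, for all sufficiently large $r$,
\begin{equation*}
\Card V_{E}(r)=\binom{r+m}{m}+\sum_{\emptyset\neq S\subseteq\{1,\dots,n\}}(-1)^{|S|}\binom{r-\ord e_{S}+m}{m}.
\end{equation*}
Read with $r$ replaced by an indeterminate $t$, the right-hand side is a $\mathbb{Z}$-linear combination of numerical polynomials (each $\binom{t-c+m}{m}$ is in fact integer-valued), hence a numerical polynomial; defining $\omega_{E}(t)$ to be it proves existence and property (i).

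It remains to establish the other direction of (ii). Each term on the right-hand side above has degree at most $m$, so $\deg\omega_{E}\le m$ in all cases. The coefficient of $t^{m}$ in each of $\binom{t+m}{m}$ and $\binom{t-\ord e_{S}+m}{m}$ is $1/m!$, so the coefficient of $t^{m}$ in $\omega_{E}$ is
\begin{equation*}
\frac{1}{m!}\Bigl(1+\sum_{\emptyset\neq S\subseteq\{1,\dots,n\}}(-1)^{|S|}\Bigr)=\frac{1}{m!}\sum_{S\subseteq\{1,\dots,n\}}(-1)^{|S|}=\frac{1}{m!}(1-1)^{n}=0
\end{equation*}
since $n\ge 1$. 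Thus $\deg\omega_{E}<m$ whenever $E\neq\emptyset$, which together with the first paragraph yields the equivalence and the value $\omega_{E}(t)=\binom{t+m}{m}$ in the empty case.

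I do not expect a real obstacle; the two points deserving care are the appeal to Dickson's lemma, which is what makes the finite inclusion--exclusion legitimate, and the bookkeeping of the ``sufficiently large $r$'' threshold (one must take $r$ larger than $\max_{S}\ord e_{S}$ so that every binomial coefficient on the right equals the value of its polynomial). An alternative that avoids Dickson's lemma is induction on $m$: slice $\mathbb{N}^{m}$ by the value of the last coordinate, reduce each slice to an $(m-1)$-dimensional count, and sum, using the one-variable case as the base; but the inclusion--exclusion argument is shorter.
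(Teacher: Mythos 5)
Your argument is correct: Dickson's lemma legitimately reduces to a finite $E$, the complement of $V_{E}$ is exactly the union of the translated orthants $e_{j}+\mathbb{N}^{m}$, the intersections are again translated orthants based at coordinatewise maxima, and the inclusion--exclusion count together with the vanishing of $\sum_{S}(-1)^{|S|}$ for $n\geq 1$ gives both the numerical-polynomial property and part (ii); the ``sufficiently large $r$'' threshold $r\geq\max_{S}\ord e_{S}$ is the only point of care and you handle it. The paper itself does not prove this statement; it cites Kolchin (Chapter 0, Section 17), where the classical existence proof proceeds by induction on $m$, slicing $\mathbb{N}^{m}$ by the last coordinate --- essentially the alternative you sketch at the end. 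Your route is different and arguably gives more: the closed formula you derive, with $\ord e_{S}$ playing the role of $b_{\theta}$, is precisely formula (2.3) of Theorem 2.4 (which the paper attributes to a separate source), so your single inclusion--exclusion computation establishes Theorems 2.3 and 2.4 simultaneously, at the price of invoking Dickson's lemma, whereas the inductive proof is self-contained but yields only existence and the degree bound without the explicit expression.
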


The polynomial $\omega_{E}(t)$ is called the {\em dimension
polynomial} of the set $E\subseteq \mathbb{N}^{m}$ .

\begin{thm}
Let $E = \{e_{1}, \dots, e_{q}\}$ ($q\geq 1$) be a finite subset of
$\mathbb{N}^{m}$.  Let $e_{i} = (e_{i1}, \dots, e_{im})$ \, ($1\leq
i\leq q$) and for any $l\in \mathbb{N}$, $0\leq l\leq q$, let
$\Theta(l,q)$ denote the set of all $l$-element subsets of the set
$\mathbb{N}_{q} = \{1,\dots, q\}$. Furthermore, let
$\bar{e}_{\emptyset j} = 0$ and for any $\theta\in\Theta(l,q)$,
$\theta\neq \emptyset$, let $\bar{e}_{\theta j} = \max
\{e_{ij}\,|\,i\in \theta\}$, $1\leq j\leq m$. (In other words, if
$\theta = \{i_{1},\dots, i_{l}\}$, then $\bar{e}_{\theta j}$ denotes
the greatest $j$th coordinate of the elements $e_{i_{1}},\dots,
e_{i_{l}}$.) Furthermore, let  $b_{\theta}
=\D\sum_{j=1}^{m}\bar{e}_{\theta j}$. Then
\begin{equation}\label{eq:6} \omega_{E}(t) =
\D\sum_{l=0}^{q}(-1)^{l}\D\sum_{\theta\in\Theta(l,\, q)}{t+ m -
b_{\theta}\choose m}
\end{equation}
\end{thm}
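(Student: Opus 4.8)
The plan is to count, for large $t$, the number of lattice points $v=(v_1,\dots,v_m)\in\mathbb{N}^m$ with $\ord v\le t$ that lie in $V_E$, and to obtain the stated alternating sum by inclusion–exclusion on the complement. Write $U_i=\{v\in\mathbb{N}^m : v_i\ge e_{i'} \text{ in each coordinate}\}$—more precisely, for each index $i\in\mathbb{N}_q$ let $U_i=\{v\in\mathbb{N}^m : v\geq_P e_i\}$ be the set of $m$-tuples dominating $e_i$ in the product order. By definition of $V_E$, an element $v\in\mathbb{N}^m$ fails to lie in $V_E$ precisely when $v\in U_i$ for some $i$, i.e. $\mathbb{N}^m\setminus V_E=\bigcup_{i=1}^q U_i$. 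Hence $\Card V_E(t)=\Card\{v:\ord v\le t\}-\Card\bigl(\bigcup_{i=1}^q U_i\bigr)(t)$, and the first term equals $\binom{t+m}{m}$ for all $t\ge 0$ (this is the $E=\emptyset$ case of Theorem 2.3, and also the $l=0$ term of the claimed formula, since $b_\emptyset=0$).

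The key step is that finite intersections of the $U_i$ are again translated orthant-like boxes whose point counts are binomial coefficients. For $\theta=\{i_1,\dots,i_l\}\in\Theta(l,q)$ with $\theta\neq\emptyset$, I claim $\bigcap_{i\in\theta}U_i=\{v\in\mathbb{N}^m : v_j\ge \bar e_{\theta j}\text{ for }j=1,\dots,m\}$, because dominating all of $e_{i_1},\dots,e_{i_l}$ coordinatewise is the same as dominating their coordinatewise maximum. Substituting $v_j=w_j+\bar e_{\theta j}$ gives a bijection with $\{w\in\mathbb{N}^m : \sum_j w_j\le t-\sum_j\bar e_{\theta j}=t-b_\theta\}$, whose cardinality is $\binom{t-b_\theta+m}{m}$ for $t$ large (and, interpreting binomials as in the paper's convention, for all $t$, with the term vanishing when $t-b_\theta<0$, though we only need the large-$t$ regime). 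Then the principle of inclusion–exclusion yields
\begin{equation}
\Card\Bigl(\bigcup_{i=1}^q U_i\Bigr)(t)=\sum_{l=1}^{q}(-1)^{l-1}\sum_{\theta\in\Theta(l,q)}\binom{t-b_\theta+m}{m}
\end{equation}
for all sufficiently large $t\in\mathbb{N}$. Subtracting this from $\binom{t+m}{m}$ and folding the $l=0$ term back in gives exactly the formula \eqref{eq:6}, and since the right-hand side is a $\mathbb{Z}$-linear combination of numerical polynomials it is itself a numerical polynomial, so it coincides with $\omega_E(t)$ by the uniqueness in Theorem 2.3.

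The main obstacle—really the only delicate point—is bookkeeping the binomial-coefficient convention near the boundary: the quantity $\Card\{w\in\mathbb{N}^m:\sum w_j\le N\}$ equals $\binom{N+m}{m}$ only for $N\ge 0$, whereas the polynomial $\binom{t-b_\theta+m}{m}$ need not vanish for $0\le t-b_\theta+m$ yet $t-b_\theta<0$. This is why the statement is only asserted for sufficiently large $t$: once $t\ge\max_\theta b_\theta$ every intersection count is genuinely given by its binomial polynomial, and inclusion–exclusion is an honest identity of integers. I would remark that, alternatively, one can avoid inclusion–exclusion entirely and derive \eqref{eq:6} from Theorem 2.3 plus induction on $q$ using the decomposition $V_{\{e_1,\dots,e_q\}}=V_{\{e_1,\dots,e_{q-1}\}}\setminus(U_q\cap V_{\{e_1,\dots,e_{q-1}\}})$, but the direct inclusion–exclusion argument is cleaner.
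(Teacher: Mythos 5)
Your proof is correct: the identification of $\mathbb{N}^{m}\setminus V_{E}$ with $\bigcup_{i=1}^{q}U_{i}$, the observation that $\bigcap_{i\in\theta}U_{i}$ is the translated orthant above the coordinatewise maximum $(\bar e_{\theta 1},\dots,\bar e_{\theta m})$ with count $\binom{t-b_{\theta}+m}{m}$ for $t\geq b_{\theta}$, and the final inclusion--exclusion plus polynomial-identity step are all sound, and your remark about the binomial convention only mattering for small $t$ is exactly right. The paper itself does not prove this theorem (it cites Kolchin and \cite[Chapter 2]{KLMP}), and your inclusion--exclusion argument is essentially the standard proof given in that reference, so there is nothing to compare beyond noting the agreement.
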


{\bf Remark.} \, Clearly, if $E\subseteq \mathbb{N}^{n}$ and
$E^{\ast}$ is the set of all minimal elements of the set $E$ with
respect to the product order on $\mathbb{N}^{m}$, then the set
$E^{\ast}$ is finite and $\omega_{E}(t) = \omega_{E^{\ast}}(t)$.
Thus, the last theorem gives an algorithm that allows one to find a
numerical polynomial associated with any subset of $\mathbb{N}^{m}$:
one should first find the set of all minimal points of the subset
and then apply Theorem 2.4.

\medskip

The following two results, proved in \cite[Section 2.5]{KLMP},
describe dimension polynomials associated with subsets of
$\mathbb{Z}^{m}$.

\begin{thm}
Let $A$ be a subset of $\mathbb{Z}^{m}$. Then there exists a
numerical polynomial $\phi_{A}(t)$ such that

\smallskip

{\em (i)}\, $\phi_{A}(r) = \Card W_{A}(r)$ for all sufficiently
large $r\in\mathbb{N}$.

\smallskip

{\em (ii)}\, $\deg\phi_{A}\leq m$ and the polynomial $\phi_{A}(t)$
can be written in the form $\phi_{A}(t) =
\D\sum_{i=0}^{m}a_{i}{{t+i}\choose i}$ where $a_{i}\in\mathbb{Z}$
and $2^{m}|a_{m}$.

\smallskip

{\em (iii)}\, $\phi_{A}(t)=0$ if and only if $(0,\dots,0)\in A$.

\smallskip

{\em (iv)}\, If $A = \emptyset$, then $\phi_{A}(t) =
\D\sum_{i=0}^{m}(-1)^{m-i}2^{i}{m\choose i}{{t+i}\choose i}$.
\end{thm}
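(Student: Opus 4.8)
The plan is to reduce the statement about subsets of $\mathbb{Z}^m$ to the already‑established results about subsets of $\mathbb{N}^m$ (Theorems 2.3 and 2.4) by splitting $\mathbb{Z}^m$ into its $2^m$ orthants. For each orthant $\mathbb{Z}_j^{(m)}$ fix the sign‑reversing bijection $\sigma_j\colon \mathbb{Z}_j^{(m)}\to \mathbb{N}^m$ sending $(e_1,\dots,e_m)$ to $(|e_1|,\dots,|e_m|)$; by the very definition of the order $\unlhd$ this is an order isomorphism onto $\mathbb{N}^m$ with the product order, and it preserves $\ord$. Set $E_j=\sigma_j(A\cap\mathbb{Z}_j^{(m)})\subseteq\mathbb{N}^m$. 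The key observation is that $w\in W_A$ iff for every $j$ the point $\sigma_j(w')$ (where $w'$ is $w$ regarded in the appropriate orthant) lies in $V_{E_j}$ — more precisely, $W_A\cap\mathbb{Z}_j^{(m)}$ is mapped by $\sigma_j$ onto $V_{E_j}$, because "no $a\in A$ with $a\unlhd w$" means in particular that no $a$ in the same orthant as $w$ dominates it (points in different orthants are $\unlhd$‑incomparable, so they impose no constraint).

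The second step is the counting identity. Since the orthants are not disjoint — they overlap precisely on the coordinate hyperplanes where some coordinate is $0$ — I would organize the count by inclusion–exclusion over which coordinates are forced to be $0$. Concretely, for a subset $I\subseteq\{1,\dots,m\}$ let $\mathbb{N}^{(I)}$ be the copy of $\mathbb{N}^{|I|}$ sitting in the coordinates outside a chosen sign pattern; the standard device is to write $\Card W_A(r)$ as an alternating sum $\sum$ over sign patterns of the counts $\Card V_{E_j}(r)$ of the "pure" parts, with correction terms for lower‑dimensional faces. Each $\Card V_{E_j}(r)$ agrees for large $r$ with the numerical polynomial $\omega_{E_j}(t)$ of Theorem 2.3, and each correction term is again such an $\omega$ associated with a projection of some $E_j$ to a coordinate subspace. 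A finite $\mathbb{Z}$‑linear combination of numerical polynomials is numerical, so $\phi_A(t)$ exists and satisfies (i); its degree is at most $m$ since each $\omega_{E_j}$ has degree $\le m$, giving the degree bound in (ii).

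For the remaining parts I would argue as follows. The divisibility $2^m\mid a_m$ in (ii): the only degree‑$m$ contributions come from the top orthant terms when $E_j$ has a component that is empty, and in the worst case $A=\emptyset$ one gets exactly $2^m$ copies of $\binom{t+m}{m}$ minus lower terms — part (iv) is in fact the special case $A=\emptyset$, obtained by taking all $E_j=\emptyset$ so that $\omega_{E_j}(t)=\binom{t+m}{m}$ and running the inclusion–exclusion over faces, which collapses to $\sum_{i=0}^m(-1)^{m-i}2^i\binom{m}{i}\binom{t+i}{i}$ (recognizable as $\phi_\emptyset$ by counting lattice points of $\mathbb{Z}^m$ of order $\le r$ directly: $\Card\{x\in\mathbb{Z}^m:\ord x\le r\}=\sum_{i=0}^m 2^i\binom{m}{i}\binom{r}{i}$ after a short combinatorial computation, then re‑expanding in the basis $\binom{t+i}{i}$). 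Finally (iii): if $(0,\dots,0)\in A$ then $0\unlhd w$ fails for no $w$ — actually $0\unlhd w$ holds for every $w$ in every orthant since $0$ is the minimum — so $W_A=\emptyset$ and $\phi_A=0$; conversely if $0\notin A$ then $0\in W_A$ (nothing is $\unlhd 0$ except $0$ itself), and since $W_A$ is a union of "staircase complements" it is either empty or infinite with positive‑degree count, so $\phi_A\ne0$.

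The main obstacle I expect is bookkeeping the inclusion–exclusion over the overlapping orthants cleanly — making the correction terms explicit enough to see both that each is a genuine $\omega_{E'}$ for an appropriate finite $E'\subseteq\mathbb{N}^{k}$ and that the coefficient arithmetic yields exactly $2^m\mid a_m$ and the stated closed form for $A=\emptyset$. Everything else is a direct translation through the sign‑flip isomorphisms $\sigma_j$ and an appeal to Theorems 2.3 and 2.4.
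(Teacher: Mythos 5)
Your route is genuinely different from the paper's. The paper does not prove Theorem 2.5 at all (it cites \cite{KLMP}), and the reduction it actually records is Theorem 2.6: embed $\mathbb{Z}^{m}$ into $\mathbb{N}^{2m}$ via $\rho(e)=(e^{+},e^{-})$ and adjoin the points $\bar{e}_{1},\dots,\bar{e}_{m}$, so that $\phi_{A}=\omega_{B}$ for a single set $B\subseteq\mathbb{N}^{2m}$. You instead decompose $\mathbb{Z}^{m}$ into its $2^{m}$ orthants and run inclusion--exclusion over the overlapping faces. That route does work for (i), the bound $\deg\phi_{A}\leq m$, (iii) and (iv), modulo two repairs: the correction terms are dimension polynomials of \emph{zero-sections} $\{e\in E_{j}\,:\,e_{i}=0 \text{ for } i\in I\}$ restricted to the remaining coordinates, not of projections of $E_{j}$ (a projection would impose constraints coming from elements of $E_{j}$ that do not actually dominate points of the face); and the identity $\sigma_{j}(W_{A}\cap\mathbb{Z}_{j}^{(m)})=V_{E_{j}}$ needs the boundary argument rather than ``different orthants are incomparable'': if $w\in\mathbb{Z}_{j}^{(m)}$ and $a\unlhd w$, then $a_{i}=0$ wherever $w_{i}=0$, hence $a\in\mathbb{Z}_{j}^{(m)}$ automatically. (Also, in (iii), $W_{A}$ can be finite and nonempty, e.g.\ $A=\{1,-1\}\subseteq\mathbb{Z}$ gives $W_{A}=\{0\}$; the correct conclusion is simply that $0\in W_{A}$ forces $\phi_{A}(r)\geq 1$ for large $r$.)

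The genuine gap is the divisibility claim $2^{m}\mid a_{m}$ in (ii). Your own analysis of the top-degree terms gives $a_{m}=\Card\{j\,:\,A\cap\mathbb{Z}_{j}^{(m)}=\emptyset\}$ (each empty $E_{j}$ contributes one $\binom{t+m}{m}$, every other term has degree $<m$), and you assert without computation that ``the coefficient arithmetic yields exactly $2^{m}\mid a_{m}$.'' It does not: the number of orthants missing $A$ need not be a multiple of $2^{m}$. Concretely, for $A=\{(1,0)\}\subseteq\mathbb{Z}^{2}$ one has, with the paper's definition of $\unlhd$ and $W_{A}$ (the same one that reproduces the paper's worked values $5t$, $4t$, $6t-1$ in Section 4), $W_{A}=\{w\in\mathbb{Z}^{2}\,:\,w_{1}\leq 0\}$, so $\Card W_{A}(r)=(r+1)^{2}=2\binom{r+2}{2}-\binom{r+1}{1}$ and $a_{m}=2$, which is not divisible by $2^{2}$. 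So the step you wave at would fail, and no amount of bookkeeping in your inclusion--exclusion (nor, for that matter, the Theorem 2.6 embedding, which computes the same count) can produce the stated divisibility for an arbitrary $A$ under the paper's literal definitions. To close this part you must go back to the source \cite{KLMP} and identify the exact convention or hypothesis under which the assertion $2^{m}\mid a_{m}$ is proved there (it does hold, for instance, when every orthant either meets $A$ or none does --- the situation for leader sets of characteristic sets, which is all the paper ever uses), and either import that hypothesis explicitly or restrict the claim accordingly; as written, your proposal proves (i), (iii), (iv) and $\deg\phi_{A}\leq m$, but not part (ii) in full.
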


\begin{thm}
With the notation of Theorem 2.5, let us consider a mapping $\rho:
\mathbb{Z}^{m}\longrightarrow\mathbb{N}^{2m}$ such that
$$\rho((e_{1},\dots, e_{m})) =(\max \{e_{1}, 0\}, \dots, \max \{e_{m}, 0\}, \max \{-e_{1}, 0\}, \dots, \max \{-e_{m}, 0\}).$$
Let $B = \rho(A)\bigcup \{\bar{e}_{1},\dots, \bar{e}_{m}\}$ where
$\bar{e}_{i}$ ($1\leq i\leq m$) is a $2m$-tuple in $\mathbb{N}^{2m}$
whose $i$th and $(m+i)$th coordinates are equal to 1 and all other
coordinates are equal to 0. Then $\phi_{A}(t)= \omega_{B}(t)$ where
$\omega_{B}(t)$ is the dimension polynomial of the set $B\subseteq
\mathbb{N}^{2m}$ (i. e., the dimension polynomial introduced in
Theorem 2.3).
\end{thm}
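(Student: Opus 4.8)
The plan is to set up a bijection between the set $W_A(r) \subseteq \mathbb{Z}^m$ counted by $\phi_A$ and the set $V_B(r) \subseteq \mathbb{N}^{2m}$ counted by $\omega_B$ (in the sense of Theorem 2.3), and then verify that the bijection respects the order functions $\ord$ on the two sides, so that the counts with the same bound $r$ agree for all large $r$. Since $\phi_A$ and $\omega_B$ are both numerical polynomials agreeing at all sufficiently large integers, they must be equal, which is the assertion.

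First I would record the elementary fact that $\rho$ is injective and order-preserving/reflecting in the appropriate sense: for $e, e' \in \mathbb{Z}^m$ lying in the same orthant $\mathbb{Z}_k^{(m)}$, one has $e \unlhd e'$ if and only if $\rho(e) \leq_P \rho(e')$ in $\mathbb{N}^{2m}$; moreover $\ord e = \ord \rho(e)$ since $|e_i| = \max\{e_i,0\} + \max\{-e_i,0\}$ coordinatewise. The subtlety is what happens across orthants. Note that $\rho(\mathbb{Z}^m)$ is exactly the set of $2m$-tuples $(u_1,\dots,u_m,v_1,\dots,v_m) \in \mathbb{N}^{2m}$ with $u_i v_i = 0$ for every $i$, i.e. those avoiding $\bar{e}_i \leq_P (\text{tuple})$ for all $i$. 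So the extra generators $\bar{e}_1,\dots,\bar{e}_m$ adjoined to $B$ have precisely the effect of cutting $V_B$ down to a subset of $\rho(\mathbb{Z}^m)$: an element $v \in \mathbb{N}^{2m}$ lies in $V_B$ only if it is not $\geq_P$ any $\bar{e}_i$, which forces $v = \rho(e)$ for a (unique) $e \in \mathbb{Z}^m$.

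Next I would check that under the identification $v = \rho(e)$, membership in $V_B$ relative to the $\rho(A)$-part of $B$ translates exactly into membership of $e$ in $W_A$. By definition $v \in V_B$ means that for every $b \in B$ there is a coordinate in which $b$ strictly exceeds $v$; restricting to $b = \rho(a)$ with $a \in A$, and using that $a$ and $e$ automatically lie in a common orthant (the one determined by the signs of the nonzero coordinates, with zeros harmless), this says precisely that no $a \in A$ satisfies $a \unlhd e$, i.e. $e \in W_A$. Conversely every $e \in W_A$ gives $\rho(e) \in V_B$ by the same chain of equivalences. Hence $\rho$ restricts to a bijection $W_A \to V_B$, and since it preserves $\ord$ it carries $W_A(r)$ bijectively onto $V_B(r)$ for every $r \in \mathbb{N}$.

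The main obstacle — and the one place to be careful — is the cross-orthant bookkeeping: verifying that when $a \in A$ and $e \in W_A$ do \emph{not} share an orthant, the condition "$\rho(a) \not\leq_P \rho(e)$'' holds automatically (because there is then some coordinate $i$ with, say, $a_i > 0 > e_i$ or $a_i < 0 < e_i$, making $\max\{a_i,0\}$ or $\max\{-a_i,0\}$ strictly exceed the corresponding coordinate of $\rho(e)$, which is $0$), so that such $a$ impose no constraint in $V_B$ just as they impose none in $W_A$. Once this compatibility is pinned down, together with the observation $\ord \rho(e) = \ord e$, the equality $\phi_A(r) = \Card W_A(r) = \Card V_B(r) = \omega_B(r)$ for all large $r$ follows, and the theorem is proved.
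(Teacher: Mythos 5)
Your argument is correct and is essentially the standard one: the paper does not reprove this statement but cites it from \cite{KLMP}, where the same device is used, namely that the generators $\bar{e}_{1},\dots,\bar{e}_{m}$ force $V_{B}\subseteq\rho(\mathbb{Z}^{m})$ and that $\rho$ is an $\ord$-preserving bijection carrying $W_{A}(r)$ onto $V_{B}(r)$, whence the two numerical polynomials agree for all large $r$. The only wording to tighten is the parenthetical claim that $a\in A$ and $e$ ``automatically lie in a common orthant'' (they need not), but this is harmless because your final paragraph correctly verifies that for cross-orthant pairs one has $\rho(a)\not\leq_{P}\rho(e)$, so such $a$ impose no constraint on either side.
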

The polynomial $\phi_{A}(t)$ is called the {\em dimension
polynomial} of the set $A\subseteq \mathbb{Z}^{m}$. It is easy to
see that Theorems 2.6 and 2.4 provide an algorithm for computing
such a polynomial.

\bigskip

{\bf 2.2.\, Some basic facts from difference algebra.}\, A {\em
difference ring} is a commutative ring $R$ together with a finite
set $\sigma = \{\alpha_{1}, \dots, \alpha_{m}\}$ of mutually
commuting injective endomorphisms of $R$ into itself. The set
$\sigma$ is called the {\em basic set\/} of the difference ring $R$,
and the endomorphisms $\alpha_{1}, \dots, \alpha_{m}$ are called
{\em translations.}\, A difference ring with a basic set $\sigma$ is
also called a {\em $\sigma$-ring}. If $\alpha_{1}, \dots,
\alpha_{m}$ are automorphisms of $R$, we say that $R$ is an {\em
inversive difference ring\/} with the basic set $\sigma$.  In this
case we denote the set $\{\alpha_{1}, \dots, \alpha_{m},
\alpha_{1}^{-1}, \dots, \alpha_{m}^{-1}\}$ by $\sigma^{\ast}$ and
call $R$ a $\sigma^{\ast}$-ring. If a difference ($\sigma$-) ring
$R$ is a field, it is called a difference (or $\sigma$-) field. If
$R$ is inversive, it is called an inversive difference field or a
$\sigma^{\ast}$-field. In what follows we deal only with inversive ($\sigma^{\ast}$-) rings and fields
where $\sigma = \{\alpha_{1}, \dots, \alpha_{m}\}$.

If $R$ is a $\sigma^{\ast}$-ring and $R_{0}$ a subring of $R$ such
that $\alpha(R_{0})\subseteq R_{0}$ for any $\alpha \in
\sigma^{\ast}$, then $R_{0}$ is called a $\sigma^{\ast}$-{\em
subring\/} of $R$, while the ring $R$ is said to be a
$\sigma^{\ast}$-{\em overring} of $R_{0}$. In this case the
restriction of an endomorphism $\alpha_{i}$ on $R_{0}$ is denoted by
the same symbol $\alpha_{i}$. If $R$ is a $\sigma^{\ast}$-field and
$R_{0}$ a subfield of $R$, which is also a $\sigma^{\ast}$-subring
of $R$, then  $R_{0}$ is said to be a $\sigma^{\ast}$-subfield of
$R$; $R$, in turn, is called a $\sigma^{\ast}$-field extension or a
$\sigma$-overfield of $R_{0}$. In this case we also say that we have
a $\sigma^{\ast}$-field extension $R/R_{0}$.

If $R$ is an inversive difference ring with a basic set $\sigma$ and
$J$ is an ideal of $R$ such that $\alpha(J)\subseteq J$ for any
$\alpha \in \sigma^{\ast}$, then $J$ is called a {\em
$\sigma^{\ast}$-ideal} of $R$. If a $\sigma^{\ast}$-ideal $P$ of the
ring $R$ is prime (in the usual sense), we say that $P$ is a {\em
prime} $\sigma^{\ast}$-{\em ideal} of $R$. An element $a\in R$ is
said to be a {\em constant} if $\alpha(a) = a$ for every
$\alpha\in\sigma$.

\smallskip

If $R$ is a $\sigma^{\ast}$-ring, then $\Gamma$ will denote the free commutative group
generated by the set $\sigma$. Elements of the
group $\Gamma$ (written in the multiplicative form
$\alpha_{1}^{i_{1}}\dots \alpha_{m}^{i_{m}}$ with $i_{1},\dots,
i_{m}\in \mathbb{Z}$) act on $R$ as automorphisms that are compositions of the automorphisms
from the set $\sigma^{\ast}$.

If $S$ is a subset of a $\sigma^{\ast}$-ring $R$, then $[S]^{\ast}$
will denote the smallest $\sigma^{\ast}$-ideal of $R$ containing
$S$; as an ideal, it is generated by the set $\Gamma S = \{\gamma(a)
| \gamma \in \Gamma, a\in S\}$. If $S$ is finite, $S=\{a_{1},\dots,
a_{k}\}$, we write $[a_{1},\dots, a_{k}]^{\ast}$ for $I =
[S]^{\ast}$ and say that $I$ is a finitely generated
$\sigma^{\ast}$-ideal of $R$. (In this case, elements $a_{1},\dots,
a_{k}$ are said to be $\sigma^{\ast}$-generators of $I$.) If $R_{0}$
is a $\sigma^{\ast}$-subring of $R$, then the intersection of all
$\sigma^{\ast}$-subrings of $R$ containing $R_{0}$ and a set
$B\subseteq R$ is the smallest $\sigma^{\ast}$-subring of $R$
containing $R_{0}$ and $B$. This ring coincides with the ring
$R_{0}[\{\gamma(b)\,|\,b\in B, \gamma \in \Gamma\}]$; it is denoted
by $R_{0}\{B\}^{\ast}$. The set $B$ is said to be a {\em set of
$\sigma^{\ast}$-generators} of $R_{0}\{B\}^{\ast}$ over $R_{0}$. If
$B = \{b_{1},\dots, b_{k}\}$ is a finite set, we say that $R_{1} =
R_{0}\{B\}^{\ast}$ is a finitely generated inversive difference (or
$\sigma^{\ast}$-) ring extension (or overring) of $R_{0}$ and write
$R_{1} = R_{0}\{b_{1},\dots, b_{k}\}^{\ast}$.

If $R$ is a $\sigma^{\ast}$-field, $R_{0}$ a
$\sigma^{\ast}$-subfield of $R$ and $B\subseteq R$, then the
intersection of all $\sigma^{\ast}$-subfields of $R$ containing
$R_{0}$ and $B$ is denoted by $R_{0}\langle B\rangle^{\ast}$. This
is the smallest $\sigma^{\ast}$-subfield of $R$ containing $R_{0}$
and $B$; it coincides with the field $R_{0}(\{\gamma(b) | b\in B,
\gamma \in \Gamma\})$. The set $B$ is called a {\em set of
$\sigma^{\ast}$-generators of the $\sigma^{\ast}$-field extension}
$R_{0}\langle B\rangle^{\ast}$ {\em of} $R_{0}$.  If $B$ is finite,
$B=\{b_{1},\dots, b_{k}\}$, we write $R_{0}\langle b_{1},\dots,
b_{k}\rangle^{\ast}$ for $R_{0}\langle B\rangle^{\ast}$.

In what follows we often consider two or more inversive difference rings
$R_{1},\dots, R_{p}$ with the same basic set
$\sigma=\{\alpha_{1},\dots, \alpha_{m}\}$. Formally speaking, it
means that for every $i=1,\dots, p$, there is some fixed mapping
$\nu_{i}$ from the set $\sigma$ into the set of all injective
endomorphisms of the ring $R_{i}$ such that any two endomorphisms
$\nu_{i}(\alpha_{j})$ and $\nu_{i}(\alpha_{k})$ of $R_{i}$ commute
($1\leq j, k\leq n$). We shall identify elements $\alpha_{j}$ with
their images $\nu_{i}(\alpha_{j})$ and say that elements of the set
$\sigma$ act as mutually commuting automorphisms of the
ring $R_{i}$ ($i=1,\dots, p$).

Let $R_{1}$ and $R_{2}$ be inversive difference rings with the same basic set
$\sigma=\{\alpha_{1},\dots, \alpha_{m}\}$. A ring homomorphism
$\phi: R_{1}\rightarrow R_{2}$ is called a {\em difference} (or
$\sigma$-) {\em homomorphism} if $\phi(\alpha(a)) =
\alpha(\phi(a))$ for any $\alpha \in \sigma, a\in R_{1}$. Clearly,
if $\phi: R_{1} \rightarrow R_{2}$ is a $\sigma$-homomorphism of
inversive difference rings, then $\phi(\alpha^{-1}(a)) =
\alpha^{-1}(\phi(a))$ for any $\alpha \in \sigma, \, a\in R_{1}$. If
a $\sigma$-homomorphism is an isomorphism (endomorphism,
automorphism, etc.), it is called a difference (or $\sigma$-)
isomorphism (respectively, difference (or $\sigma$-) endomorphism,
difference (or $\sigma$-) automorphism, etc.).  If $R_{1}$ and
$R_{2}$ are two $\sigma^{\ast}$-overrings of the same $\sigma^{\ast}$-ring $R_{0}$
and $\phi: R_{1}\rightarrow R_{2}$ is a $\sigma$-homomorphism such
that $\phi(a) = a$ for any $a\in R_{0}$, we say that $\phi$ is a
difference (or $\sigma$-) homomorphism over $R_{0}$ or that $\phi$
leaves the ring $R_{0}$ fixed.

It is easy to see that the kernel of any $\sigma$-homomorphism of
$\sigma^{\ast}$-rings $\phi: R\rightarrow R'$ is a $\sigma^{\ast}$-ideal
of $R$. Conversely, let $g$ be a surjective
homomorphism of a $\sigma^{\ast}$-ring $R$ onto a ring $S$ such that $\Ker
g$ is a $\sigma^{\ast}$-ideal of $R$. Then there is a unique
structure of a $\sigma^{\ast}$-ring on $S$ such that $g$ is a
$\sigma$-homomorphism. In particular, if $I$ is a
$\sigma^{\ast}$-ideal of a $\sigma^{\ast}$-ring $R$, then the factor ring
$R/I$ has a unique structure of a $\sigma^{\ast}$-ring such that the
canonical surjection $R\rightarrow R/I$ is a $\sigma$-homomorphism.
In this case $R/I$ is said to be the $\sigma^{\ast}$-{\em factor ring}
of $R$ by the $\sigma^{\ast}$-ideal $I$.

If a $\sigma^{\ast}$-ring $R$ is an integral domain, then its
quotient field $Q(R)$ can be naturally considered as a
$\sigma^{\ast}$-overring of $R$. In this case $Q(R)$ is said to be
the quotient $\sigma^{\ast}$-{\em field} of $R$. Clearly, if a
$\sigma^{\ast}$-field $K$ contains $R$ as a $\sigma^{\ast}$-subring,
then $K$ contains the quotient $\sigma^{\ast}$-field $Q(R)$.

Let $R$ be a $\sigma^{\ast}$-ring, $\Gamma$ the free commutative
group generated by $\sigma$, and $Y = \{y_{i}\,|\,i\in I\}$ a family
of elements from some $\sigma^{\ast}$-overring of $R$. We say that
the family $Y$ is {\em transformally} (or $\sigma$-{\em
algebraically) dependent} over $R$, if the family $\Gamma(Y) =
\{\gamma(y_{i})\,|\,i\in I, i\in I\}$ is algebraically dependent
over $R$ (that is, there exist elements $v_{1},\dots, v_{k}\in
\Gamma(Y)$ and a non-zero polynomial $f(X_{1},\dots, X_{k})$ with
coefficients in $R$ such that $f(v_{1},\dots, v_{k}) = 0$).
Otherwise, the family $Y$ is said to be {\em transformally} (or
$\sigma$-{\em algebraically) independent} over $R$ or a family of
{\em inversive difference} (or $\sigma^{\ast}$-) {\em
indeterminates} over $R$. In the last case, the $\sigma^{\ast}$-ring
$R\{(y_{i})_{i\in I}\}^{\ast}$ is called the {\em algebra of
inversive difference} (or $\sigma^{\ast}$-) {\em polynomials} over
$R$. As it is shown in \cite[Proposition 3.4.4]{KLMP}, for any set
$I$, there exists an algebra of $\sigma^{\ast}$-polynomials $S =
R\{(y_{i})_{i\in I}\}^{\ast}$ over $R$ in a family of
$\sigma$-indeterminates $Y = \{y_{i}\,|\,i\in I\}$ with indices from
the set $I$. If $S$ and $S'$ are two such algebras, then there
exists a $\sigma$-isomorphism $S\rightarrow S'$ that leaves the ring
$R$ fixed. If $R$ is an integral domain, then any algebra of
$\sigma^{\ast}$-polynomials over $R$ is an integral domain.

The algebra of $\sigma^{\ast}$-polynomials in the family of
$\sigma$-indeterminates $Y$ over $R$ can be constructed by extending
the natural structure of a $\sigma^{\ast}$-ring from $R$ to the
polynomial ring $S = R[\{y_{i,\gamma}\,|\,i\in I,
\gamma\in\Gamma\}]$ in the set of indeterminates $\{y_{i,\gamma}\}$
indexed by $I\times\Gamma$. The extension of the action of an
element $\beta\in\sigma^{\ast}$ from $R$ to $S$ is defined by
$\beta(y_{i,\gamma}) = y_{i,\beta\gamma}$ ($i\in I,
\gamma\in\Gamma$); in what follows, we denote $y_{i,1}$ by $y_{i}$
and write $\gamma y_{i}$ for $y_{i,\gamma}$.
\begin{rem} Power products of elements of $\sigma$ with nonnegative exponents form a
commutative semigroup $T\subset\Gamma$; in the case of non-inversive difference rings,
a family $Y = \{y_{i}\,|\,i\in I\}$ is said to be $\sigma$-algebraically
independent over a $\sigma$-ring $R$ if the family
$T(Y) = \{\tau y_{i}\,|\,\tau\in T, i\in I\}$ is algebraically independent over $R$.
If $R$ is a  $\sigma^{\ast}$-ring and $Y$ is a family of elements of some
$\sigma^{\ast}$-overring of $R$, then the family $\Gamma(Y)$
is algebraically dependent over $R$ if and only if the family $T(Y)$ has this property.
That is why we use the term ''$\sigma$-algebraically dependent'',
not ''$\sigma^{\ast}$-algebraically dependent''.
\end{rem}
Let $K$ be a $\sigma^{\ast}$-field and $L$ a
$\sigma^{\ast}$-overfield of $K$. An element $u\in L$ is said to be
{\em transformally algebraic} (or {\em $\sigma$-algebraic}) if the
family $\{\gamma u\,|\,\gamma\in\Gamma\}$ is algebraic over $K$.
Otherwise, we say that $u$ is {\em transformally} (or $\sigma$-)
{\em transcendental} over $K$. As it is shown in \cite[Sect.
4.1]{Levin4}, there is a subset $B$ of $L$ such that $B$ is
$\sigma$-algebraically independent over $K$ and every element of $L$
is $\sigma$-algebraic over $K\langle B\rangle^{\ast}$. Such a set
$B$ is called a $\sigma$-transcendence basis of $L$ over $K$. All
$\sigma$-transcendence bases of $L$ over $K$ have the same
cardinality. If the $\sigma^{\ast}$-field extension $L/K$ is
finitely generated, then every $\sigma$-transcendence basis of $L$
over $K$ is finite; the number of its elements is called the {\em
$\sigma$-transcendence degree} of the extension $L/K$ and is denoted
by $\sigma$-$\trdeg_{K}L$.

If $K\{(y_{i})_{i\in I}\}^{\ast}$ is an algebra of
$\sigma^{\ast}$-polynomials over a $\sigma^{\ast}$-ring $K$ and
$(\eta_{i})_{i\in I}$ a family of elements from a
$\sigma^{\ast}$-overfield of $K$, one can define a surjective
$\sigma$-homomorphism $\phi_{\eta}:\,K\{(y_{i})_{i\in
I}\}^{\ast}\rightarrow K\{(\eta_{i})_{i\in I}\}^{\ast}$ that maps
every $y_{i}$ onto $\eta_{i}$ and leaves elements of $K$ fixed. This
homomorphism is called the substitution of $(\eta_{i})_{i\in I}$ for
$(y_{i})_{i\in I}$. If $g$ is a $\sigma^{\ast}$- polynomial, then
its image under a substitution of $(\eta_{i})_{i\in I}$ for
$(y_{i})_{i\in I}$ is denoted by $g((\eta_{i})_{i\in I})$. The
kernel of $P$ of $\phi_{\eta}$ is a prime $\sigma^{\ast}$-ideal of
$K\{(y_{i})_{i\in I}\}^{\ast}$, since $K\{(\eta_{i})_{i\in
I}\}^{\ast}$ is an integral domain (it is contained in the field
$L$). Therefore, the $\sigma^{\ast}$-field $K\langle(\eta_{i})_{i\in
I}\rangle^{\ast}$ can be treated as the quotient
$\sigma^{\ast}$-field of $R\{(y_{i})_{i\in I}\}^{\ast}/P$.

Let $K$ be an inversive difference ($\sigma^{\ast}$-) field and $n$ a
positive integer. By an {\em $n$-tuple over} $K$ we mean an
$n$-dimensional vector $a = (a_{1},\dots, a_{n})$ whose coordinates
belong to some $\sigma^{\ast}$-overfield of $K$.  If each $a_{i}$
($1\leq i\leq n$) is $\sigma$-algebraic over the $\sigma$-field $K$,
we say that the {\em $n$-tuple $a$ is $\sigma$-algebraic over $K$}.

Let $R = K\{y_{1},\dots, y_{n}\}^{\ast}$ be the algebra of
$\sigma^{\ast}$-polynomials in $n$ $\sigma^{\ast}$-indeterminates
$y_{1},\dots, y_{n}$ over $K$ and  $\Phi = \{f_{j} | j\in
J\}\subseteq R$. An $n$-tuple $\eta = (\eta_{1},\dots, \eta_{n})$
over $K$ is said to be a solution of the set $\Phi$ or a solution of
the {\em system of algebraic difference equations}
$f_{j}(y_{1},\dots, y_{n}) = 0$ ($j\in J$) if $\Phi$ is contained in
the kernel of the substitution of $(\eta_{1},\dots, \eta_{n})$ for
$(y_{1},\dots, y_{n})$. A system of algebraic difference equations
$\Phi$ is said to be {\em prime} if the $\sigma^{\ast}$-ideal
generated by $\Phi$ in the ring $R$ is prime.

Clearly, if one fixes an $n$-tuple $\eta = (\eta_{1},\dots,
\eta_{n})$ over a $\sigma^{\ast}$-field $K$, then all
$\sigma^{\ast}$-polynomials of the ring $R = K\{y_{1},\dots,
y_{n}\}^{\ast}$, for which $\eta$ is a solution, form a prime
$\sigma^{\ast}$-ideal; it is called the {\em defining
$\sigma^{\ast}$-ideal} of $\eta$ over $K$.

\section{Characteristic Sets and Difference Dimension Polynomials}

Let $K$ be an inversive difference ($\sigma^{\ast}$-) field with a
basic set $\sigma = \{\alpha_{1},\dots, \alpha_{m}\}$ and let
$\Gamma$ denote the free commutative group generated by $\sigma$. If
$\gamma = \alpha_{1}^{k_{1}}\dots \alpha_{m}^{k_{m}}\in \Gamma$
($k_{1},\dots, k_{m}\in \mathbb{Z}$), the {\em order} of the element
$\gamma$ is defined as $\ord\gamma = \D\sum_{i=1}^{m}|k_{i}|$. For
any $r\in\mathbb{N}$, we set $\Gamma(r) = \{\gamma\in\Gamma\,|\,
\ord\gamma\leq r\}$. Furthermore, for every $j=1,\dots, 2^{m}$, we
set $\Gamma_{j} = \{\gamma = \alpha_{1}^{k_{1}}\dots
\alpha_{m}^{k_{m}}\in\Gamma\,|\,(k_{1},\dots, k_{m})\in
\mathbb{Z}^{(m)}_{j}\}$ (see the representation (2.2) of the set
$\mathbb{Z}^{m}$ as the union of the orthants).

Let $K\{y_{1},\dots, y_{n}\}^{\ast}$ be the algebra of
$\sigma^{\ast}$-polynomials in $\sigma^{\ast}$-indeterminates
$y_{1},\dots, y_{n}$ over $K$ and let $\Gamma Y$ denote the set
$\{\gamma y_{i} | \gamma \in \Gamma, 1\leq i\leq m \}$ whose
elements are called {\em terms}. By the order of a term $u = \gamma
y_{j}$ we mean the order of the element $\gamma \in \Gamma$.
Furthermore, setting $(\Gamma Y)_{j} = \{\gamma y_{i} | \gamma \in
\Gamma_{j}, 1\leq i\leq n \}$ ($j=1,\dots, 2^{m}$) we obtain a
representation of the set of terms as a union $\Gamma Y =
\D\bigcup_{j=1}^{2^{m}}(\Gamma Y)_{j}.$
\begin{defn} A term $v\in \Gamma Y$ is called a {\bf transform}
of a term $u\in \Gamma Y$ if and only if $u$ and $v$ belong to the
same set $(\Gamma Y)_{j} \, (1\leq j\leq 2^{m})$ and $v = \gamma u$
for some $\gamma \in \Gamma_{j}$. If $\gamma \neq 1$, $v$ is said to
be a proper transform of $u$.
\end{defn}
In what follows, we say that an element $\gamma\in\Gamma$ is {\em
similar} to a term $u\in\Gamma Y$ and write $\gamma\sim u$ if
$\gamma\in\Gamma_{j}$ and $u\in(\Gamma Y)_{j}$ for the same index
$j$ ($1\leq j\leq 2^{m}$). We also write $\gamma\sim\gamma'$ if
$\gamma, \gamma'\in\Gamma_{j}$ and $u\sim v$ if $u, v\in(\Gamma
Y)_{j}$ for the same $j$.

\begin{defn} A well-ordering of the set of terms
$\Gamma Y$ is called a {\bf ranking} of the family of
$\sigma^{\ast}$-indeterminates $y_{1},\dots, y_{n}$ (or a ranking of
the set $\Gamma Y$) if it satisfies the following conditions. {\em
(}We use the standard symbol $\leq$ for the ranking; it will be
always clear what order is denoted by this symbol.{\em )}

\smallskip

{\em (i)}\, If $u\in (\Gamma Y)_{j}$ and $\gamma \in \Gamma_{j}$
($1\leq j\leq 2^{m}$), then $u\leq \gamma u$.

\smallskip

{\em (ii)}\, If $u, v\in (\Gamma Y)_{j}$ ($1\leq j\leq 2^{m}$),
$u\leq v$ and $\gamma \in \Gamma_{j}$, then $\gamma u \leq \gamma
v$.

A ranking of the $\sigma^{\ast}$-indeterminates $y_{1},\dots, y_{n}$
is called orderly if for any $j=1,\dots, 2^{m}$ and for any two
terms $u, v \in (\Gamma Y)_{j}$, the inequality $\ord\,u < \ord\,v$
implies that $u < v$ (as usual, $v < w$ means $v\leq w$ and $v\neq
w$).
\end{defn}

As an example of an orderly ranking of the
$\sigma^{\ast}$-indeterminates $y_{1},\dots, y_{n}$ one can consider
the {\em standard ranking} defined as follows: $u =
\alpha_{1}^{k_{1}}\dots \alpha_{m}^{k_{m}}y_{i}\leq v =
\alpha_{1}^{l_{1}}\dots \alpha_{m}^{l_{m}}y_{j}$ if and only if the
$(2m+2)$-tuple $(\D\sum_{\nu = 1}^{m}|k_{\nu}|, |k_{1}|,\dots,
|k_{m}|, k_{1},\dots, k_{m}, i)$ is less than or equal to the
$(2m+2)$-tuple $(\D\sum_{\nu = 1}^{m}|l_{\nu}|, |l_{1}|,\dots,
|l_{m}|, l_{1},\dots, l_{m}, j)$ with respect to the lexicographic
order on $\mathbb{Z}^{2m+2}$.

In what follows, we assume that an orderly ranking $\leq$ of the set
of $\sigma^{\ast}$-indeterminates $y_{1},\dots, y_{n}$ is fixed. If
$A\in K\{y_{1},\dots, y_{n}\}^{\ast}$, then the greatest (with
respect to the ranking $\leq$) term that appears in $A$ is called
the {\em leader} of $A$; it is denoted by $u_{A}$. If $u = u_{A}$
and $d=\deg_{u}A$, then the $\sigma^{\ast}$-polynomial $A$ can be
written as $A = I_{d}u^{d} + I_{d-1}u^{d-1}+\dots + I_{0}$ where
$I_{k} (0\leq k\leq d)$ do not contain $u$. The
$\sigma^{\ast}$-polynomial $I_{d}$ is called the {\em initial} of
$A$; it is denoted by $I_{A}$.

\begin{defn}
Let $A, B\in K\{y_{1}\dots, y_{n}\}^{\ast}$. We say that $A$ has higher
rank than $B$ and write $\rk A > \rk B$ if either $A\notin K,\, B\in
K$, or $u_{A}$ has higher rank than $u_{B}$, or $u_{A} = u_{B}$ and
$\deg_{u_{A}}A > \deg_{u_{A}}B$. If $u_{A} = u_{B}$ and
$\deg_{u_{A}}A = \deg_{u_{A}}B$, we say that $A$ and $B$ have the
same rank and write $\rk A = \rk B$.
\end{defn}

Note that distinct $\sigma^{\ast}$-polynomials can have the same
rank and if $A\notin K$, then $I_{A}$ has lower rank than $A$.

\begin{defn}
Let $A, B\in K\{y_{1},\dots, y_{n}\}^{\ast}$. The
$\sigma^{\ast}$-polynomial $A$ is said to be {\em reduced} with
respect to $B$ if $A$ does not contain any power of a transform
$\gamma u_{B}$ ($\gamma \in \Gamma$) whose exponent is greater than
or equal to $\deg_{u_{B}}B$ (recall that by the definition of a
transform, $\gamma\sim u_{B}$). If $\mathcal{A}\subseteq
K\{y_{1},\dots, y_{n}\}^{\ast}\setminus K$, then a
$\sigma^{\ast}$-polynomial $A\in K\{y_{1},\dots, y_{n}\}^{\ast}$, is
said to be reduced with respect to $\mathcal{A}$ if $A$ is reduced
with respect to every element of the set $\mathcal{A}$.

\smallskip

A set $\mathcal{A}\subseteq K\{y_{1},\dots, y_{n}\}^{\ast}$ is said
to be autoreduced if either it is empty or $\mathcal{A}\bigcap K =
\emptyset$ and every element of $\mathcal{A}$ is reduced with
respect to all other elements of the set $\mathcal{A}$.
\end{defn}

The proof of the following proposition can be obtained by mimicking
the proof of the corresponding statement about autoreduced sets of
differential polynomials, see \cite[Ch. 1, Sect. 9]{Kolchin}.

\begin{prop}
Every autoreduced set is finite and distinct elements of an
autoreduced set have distinct leaders.
\end{prop}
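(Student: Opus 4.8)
The plan is to prove the two assertions — finiteness, and the fact that distinct elements have distinct leaders — essentially in reverse order, since the second fact is the crux and the first is then a consequence of a Dickson-type minimal-basis argument applied orthant by orthant.

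First I would establish that distinct elements of an autoreduced set $\mathcal{A}$ have distinct leaders. Suppose $A, B\in\mathcal{A}$ with $A\neq B$ but $u_A = u_B = u$. Since $\mathcal{A}$ is autoreduced, $A$ is reduced with respect to $B$, which in particular forbids $A$ from containing any power of the transform $1\cdot u_B = u$ with exponent $\geq\deg_{u_B}B$; hence $\deg_u A < \deg_u B$. By symmetry, $B$ reduced with respect to $A$ forces $\deg_u B < \deg_u A$, a contradiction. So the leader map $A\mapsto u_A$ is injective on $\mathcal{A}$.

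Next I would prove finiteness. Assume for contradiction that $\mathcal{A}$ is infinite; by the previous paragraph its leaders $\{u_A : A\in\mathcal{A}\}$ form an infinite subset of the set of terms $\Gamma Y=\bigcup_{j=1}^{2^m}(\Gamma Y)_j$. Since there are finitely many components, infinitely many leaders lie in a single $(\Gamma Y)_j$ and involve a single indeterminate $y_i$, so (identifying $\gamma y_i\in(\Gamma Y)_j$ with the exponent tuple $(|k_1|,\dots,|k_m|)\in\mathbb{N}^m$ via the parametrization of the orthant $\Gamma_j$) we get an infinite sequence $A_1, A_2,\dots$ in $\mathcal{A}$ whose leaders correspond to tuples in $\mathbb{N}^m$. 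By the Dickson/Noetherian property of $\mathbb{N}^m$ under the product order — equivalently, $\mathbb{N}^m$ has no infinite antichain and no infinite strictly descending sequence — there exist indices $p<q$ such that the exponent tuple of $u_{A_p}$ is $\leq_P$ that of $u_{A_q}$. This means $u_{A_q}=\gamma\,u_{A_p}$ for some $\gamma\in\Gamma_j$, i.e. $u_{A_q}$ is a transform of $u_{A_p}$ in the same component. But then $A_q$ contains a positive power of the transform $\gamma u_{A_p}$, namely $(u_{A_q})^{\deg_{u_{A_q}}A_q}$ with exponent $\geq 1$; to conclude non-reducedness I must also arrange that this exponent is $\geq\deg_{u_{A_p}}A_p$. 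This is handled by first passing, via the same Dickson argument applied to the pairs (exponent tuple of leader, $\deg_{u}A$) in $\mathbb{N}^{m+1}$, to a subsequence along which both the exponent tuples and the leading degrees are non-decreasing; then $p<q$ gives $\deg_{u_{A_p}}A_p\leq\deg_{u_{A_q}}A_q$, so $A_q$ contains $(\gamma u_{A_p})^{e}$ with $e=\deg_{u_{A_q}}A_q\geq\deg_{u_{A_p}}A_p$, contradicting that $A_q$ is reduced with respect to $A_p$.

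The step I expect to be the main obstacle is the bookkeeping in the finiteness argument: one must be careful that the orthant structure of $\Gamma$ is respected (a term and its transform must lie in the \emph{same} $(\Gamma Y)_j$, which is exactly why the definition of transform and of ranking are phrased orthant-wise), and one must invoke the Dickson-type finiteness not on raw $\mathbb{Z}^m$-exponents but on the absolute-value tuples in $\mathbb{N}^m$, where the product order governs the notion of transform. Once the correct combinatorial model — finitely many copies of $\mathbb{N}^{m+1}$, one per (orthant, indeterminate) pair, with the product order — is set up, the argument is the standard one used for autoreduced sets of differential polynomials as in \cite[Ch.~1, Sect.~9]{Kolchin}.
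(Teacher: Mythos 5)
Your proof is correct and is essentially the approach the paper intends: the paper merely refers to Kolchin's argument for autoreduced sets of differential polynomials, and your orthant-by-orthant adaptation (distinct leaders via mutual reducedness at $\gamma=1$, then finiteness via Dickson's lemma applied to the absolute-value exponent tuples together with the leading degrees in $\mathbb{N}^{m+1}$) is exactly that adaptation.
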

\begin{thm}{\em (\cite[Theorem 2.4.7]{Levin4})} Let $\mathcal{A} =
\{A_{1},\dots, A_{p}\}$ be an autoreduced subset in the ring of
$\sigma^{\ast}$-polynomials  $K\{y_{1},\dots, y_{n}\}^{\ast}$ and
let $D\in K\{y_{1},\dots, y_{n}\}^{\ast}$. Furthermore, let
$I(\mathcal{A})$ denote the set of all $\sigma^{\ast}$-polynomials
$B\in K\{y_{1},\dots, y_{n}\}^{\ast}$ such that either $B =1$ or $B$ is a
product of finitely many polynomials of the form $\gamma(I_{A_{i}})$
where $\gamma \in \Gamma,\, i=1,\dots, p$. Then there exist
$\sigma^{\ast}$-polynomials $J\in I(\mathcal{A})$ and $D_{0}\in
K\{y_{1},\dots, y_{n}\}^{\ast}$ such that $D_{0}$ is reduced with respect
to $\mathcal{A}$ and $JD\equiv D_{0} (mod\, [\mathcal{A}])$.
\end{thm}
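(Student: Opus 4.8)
The plan is to establish the reduction algorithm for $\sigma^{\ast}$-polynomials by adapting the classical differential-algebra argument (Kolchin, \cite[Ch. 1, Sect. 9]{Kolchin}; also Ritt), with the necessary modifications to handle the inversive setting, where transforms are indexed by the whole group $\Gamma$ and the ``similarity'' constraint $\gamma \sim u_{B}$ restricts which transforms of a leader can be used to lower the rank. I would proceed by a Noetherian induction on the rank of $D$ with respect to the fixed orderly ranking $\leq$. If $D$ is already reduced with respect to $\mathcal{A}$, take $J = 1$ and $D_{0} = D$. Otherwise, $D$ contains some power $(\gamma u_{A_{i}})^{e}$ of a transform $\gamma u_{A_{i}}$ (with $\gamma \in \Gamma$, $\gamma \sim u_{A_{i}}$) with exponent $e \geq d_{i} := \deg_{u_{A_{i}}} A_{i}$.

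First I would fix attention on a term $v = \gamma u_{A_{i}}$ that is, among all such ``offending'' transforms occurring in $D$, of highest rank (here I use that a ranking is a well-ordering, so such a maximal offending term exists, and that the set of terms appearing in $D$ is finite). Write $\gamma A_{i} = I'\, v^{d_{i}} + (\text{lower-degree-in-}v \text{ terms})$; since the ranking satisfies condition (ii) of Definition 3.3, the leader of $\gamma A_{i}$ is exactly $v$, its degree in $v$ is $d_{i}$, and its initial is $\gamma(I_{A_{i}}) \in I(\mathcal{A})$. Now view $D$ as a polynomial in $v$ and perform one step of pseudo-division by $\gamma A_{i}$: there is an integer $k \geq 1$ (I can take $k = e - d_{i} + 1$ at worst, or work one degree at a time) such that
\[
\gamma(I_{A_{i}})^{k}\, D \equiv D' \pmod{[\gamma A_{i}]}
\]
where $D'$ has strictly smaller degree in $v$ than $D$ does — and, crucially, $D'$ introduces no new terms of rank $\geq v$: the only terms appearing in $\gamma A_{i}$ are $v$ itself and terms of strictly lower rank (because $u_{A_{i}}$ is the leader of $A_{i}$ and transforming by $\gamma \in \Gamma_{j}$ preserves the ranking order among similar terms, while $A_{i}$ being an element of an autoreduced set is reduced with respect to the others, so no accidental higher transforms appear). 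Hence the rank of $D'$ is strictly lower than that of $D$: either its degree in $v$ has dropped (and no higher-ranked offending term was created), or, after finitely many such steps, $v$ no longer occurs to a power $\geq d_{i}$ and the highest offending term has been replaced by one of strictly lower rank.

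Iterating, after finitely many steps (termination is guaranteed because ranks of the successive $D'$'s form a strictly decreasing sequence in the well-ordered set of ranks) I reach a $\sigma^{\ast}$-polynomial $D_{0}$ that is reduced with respect to $\mathcal{A}$, and multiplying together all the factors $\gamma(I_{A_{i}})^{k}$ accumulated along the way gives the desired $J \in I(\mathcal{A})$ with $JD \equiv D_{0} \pmod{[\mathcal{A}]}$. The main obstacle, and the point demanding care, is the bookkeeping that a single pseudo-division step against $\gamma A_{i}$ does not create a fresh transform of $u_{A_{i}}$ (or of any $u_{A_{j}}$) of rank higher than the one just eliminated; this rests on conditions (i)--(ii) of the ranking together with Proposition 3.7 (distinct elements of $\mathcal{A}$ have distinct leaders, so their transforms cannot ``interfere'' at the top), and it is exactly what makes the induction on rank well-founded. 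Everything else is the routine pseudo-division estimate, so I would not belabor it.
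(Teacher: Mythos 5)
You should know at the outset that the paper itself contains no proof of this statement: it is quoted verbatim from \cite[Theorem 2.4.7]{Levin4}, followed only by the remark that the reduction procedure is algorithmic and similar to the proof of Theorem 2.4.1 of \cite{Levin4}. Your pseudo-division-with-induction scheme is indeed the standard route taken by that source, so the overall strategy is the right one. However, there is a genuine gap precisely at the step you yourself single out as the crux: the assertion that the pseudo-division of $D$ by $\gamma A_{i}$ introduces no terms of rank $\geq v=\gamma u_{A_{i}}$. The justification you give --- that ``transforming by $\gamma\in\Gamma_{j}$ preserves the ranking order among similar terms'' --- only applies to those terms of $A_{i}$ that lie in the same orthant $(\Gamma Y)_{j}$ as $u_{A_{i}}$. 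In the inversive setting $A_{i}$ will in general contain terms $w$ from other orthants (every example in Section 4 of the paper does, e.g.\ the polynomial $B$ in (4.5) involves $\alpha_{1},\alpha_{1}^{-1},\alpha_{2},\alpha_{2}^{-1}$), and for such $w$ neither condition (i) nor condition (ii) of Definition 3.2, nor orderliness, gives any comparison between $\gamma w$ and $\gamma u_{A_{i}}$: applying $\gamma$ to $w$ produces cancellation of exponents of opposite sign, $\gamma w$ may land in a different orthant than $\gamma u_{A_{i}}$, and across orthants the ranking axioms impose essentially nothing. Proposition 3.7 does not help either: distinct leaders do not prevent $\gamma_{1}u_{A_{i}}=\gamma_{2}u_{A_{j}}$, and in any case the danger is not interference of leaders but the appearance in $D'$ of a transform of some $u_{A_{l}}$ of rank higher than $v$ raised to a power $\geq\deg_{u_{A_{l}}}A_{l}$. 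If that can happen, your termination measure (highest offending transform, then its degree) does not decrease and the induction is not well-founded as written. (A smaller point: Noetherian induction ``on the rank of $D$'' is not the correct measure anyway, since reducing at an offending term below $u_{D}$ leaves $\rk D$ unchanged; the lexicographic pair you switch to later is the right one, but it works only once the claim above is secured.)

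The missing ingredient is therefore a proof, for the ranking actually being used, of the statement: if $w$ is a term with $w\leq u_{A_{i}}$ and $\gamma\sim u_{A_{i}}$, then $\gamma w<\gamma u_{A_{i}}$ (equivalently, that $\gamma u_{A_{i}}$ is the leader of $\gamma A_{i}$ and its degree there is $\deg_{u_{A_{i}}}A_{i}$, with initial $\gamma(I_{A_{i}})$). For $m=1$ this is forced by (i)--(ii); for the standard ranking it can be checked directly, by splitting $\gamma$ into the part cancelled against the exponents of $w$ and the remaining part, comparing $\ord(\gamma w)\leq\ord\gamma+\ord w$ with $\ord(\gamma u_{A_{i}})=\ord\gamma+\ord u_{A_{i}}$, and then verifying the tie-breaking coordinates; but for an arbitrary orderly ranking it does not follow from the axioms you invoke and must either be proved or circumvented by reorganizing the induction so that newly created cross-orthant terms cannot spoil the measure. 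This multi-orthant phenomenon is exactly what distinguishes the inversive case from the differential/ordinary difference case you are transplanting from Kolchin, and it is the content hidden in the citation to \cite[Theorem 2.4.7]{Levin4}; as written, your argument has not supplied it.
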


Note that, with the notation of the last theorem, the process of
reduction that leads to the $\sigma^{\ast}$-polynomials $J\in
I(\mathcal{A})$ and $D_{0}$ is algorithmic; the steps of the
corresponding algorithm are similar to the steps described in the
proof of Theorem 2.4.1 of \cite{Levin4}. The
$\sigma^{\ast}$-polynomial $D_{0}$ is called the {\em remainder} of
$D$ with respect to $\mathcal{A}$. We also say that $D$ {\em reduces
to $D_{0}$ modulo $\mathcal{A}$}.

In what follows elements of an autoreduced set in $K\{y_{1},\dots,
y_{n}\}^{\ast}$ will be always written in the order of increasing
rank. With this assumption we introduce the following partial order
on the set of all autoreduced sets.

\begin{defn}
Let $\mathcal{A} = \{A_{1},\dots, A_{p}\}$ and $\mathcal{B} =
\{B_{1},\dots, B_{q}\}$ be two autoreduced sets of
$\sigma^{\ast}$-polynomials in $K\{y_{1},\dots, y_{n}\}^{\ast}$. We
say that $\mathcal{A}$ has lower rank than $\mathcal{B}$ and write
$\rk\mathcal{A} < \rk\mathcal{B}$ if either there exists
$k\in\mathbb{N},\, 1\leq k\leq \min\{p, q\}$, such that $\rk A_{i} =
\rk B_{i}$ for $i=1,\dots, k-1$ and $\rk A_{k} < \rk B_{k}$, or $p >
q$ and $\rk A_{i} = \rk B_{i}$ for $i=1,\dots, q$.
\end{defn}

Mimicking the arguments of \cite[Ch. 1, Sect. 9]{Kolchin}, one
obtains that every nonempty family of autoreduced subsets of
$K\{y_{1},\dots, y_{n}\}^{\ast}$ contains an autoreduced set of
lowest rank. In particular, if $\emptyset\neq J\subseteq
F\{y_{1},\dots, y_{n}\}^{\ast}$, then the set $J$ contains an
autoreduced set of lowest rank called a {\bf characteristic set} of
$J$.

\begin{prop}{\em (\cite[Proposition 2.4.8]{Levin4})}
Let $K$ be an inversive difference field with a basic set $\sigma$,
$J$ a $\sigma^{\ast}$-ideal of the algebra of
$\sigma^{\ast}$-polynomials $K\{y_{1},\dots, y_{n}\}^{\ast}$, and
$\mathcal{A}$ a characteristic set of $J$. Then

\smallskip

{\em (i)}\, The ideal $J$ does not contain nonzero
$\sigma^{\ast}$-polynomials reduced with respect to $\mathcal{A}$.
In particular, if $A\in\mathcal{A}$, then $I_{A}\notin J$.

\smallskip

{\em (ii)}\, If $J$ is a prime $\sigma^{\ast}$-ideal, then $J =
[\mathcal{A}]^{\ast}:\Upsilon(\mathcal{A})$ where
$\Upsilon(\mathcal{A})$ denotes the set of all finite products of
elements of the form $\gamma(I_{A})$ ($\gamma \in \Gamma,
A\in\mathcal{A}$).
\end{prop}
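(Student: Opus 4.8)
The plan is to adapt the classical Ritt--Kolchin argument for characteristic sets of differential ideals (see \cite[Ch. 1, Sect. 9]{Kolchin}) and of difference ideals (see \cite{KLMP}, \cite{Levin4}) to the inversive setting. For part (i), suppose for contradiction that $J$ contains a nonzero $\sigma^{\ast}$-polynomial $B$ that is reduced with respect to $\mathcal{A}$. If $B\in K$, then $B$ is a unit, so $J = K\{y_{1},\dots,y_{n}\}^{\ast}$; but then $\{1\}$, or rather any single nonzero constant, would be an autoreduced subset of $J$ of strictly lower rank than $\mathcal{A}$ (since an autoreduced set containing an element of $K$ is only the empty set by Definition, forcing $\mathcal{A}=\emptyset$ and hence $J$ contains no nonzero reduced polynomials trivially, contradiction). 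If $B\notin K$, I would show that $\mathcal{A}$ cannot be a characteristic set: let $A_{k}$ be the element of $\mathcal{A}$ (if any) whose leader $u_{A_{k}}$ has the same or comparable rank position, and consider two cases. Either $u_{B}$ is not a transform of any $u_{A_{i}}$, in which case $\{A_{1},\dots,A_{k-1},B\}$ (with $k$ chosen so $B$ is inserted at the right rank slot) is autoreduced — using that $B$ is reduced with respect to all $A_{i}$ and each $A_{i}$ with lower-ranked leader is automatically reduced with respect to $B$ — and has lower rank; or $B$ and some $A_{i}$ have leaders that are transforms of a common term and $\rk B < \rk A_{i}$, giving an autoreduced set of lower rank in the same way. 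Either way this contradicts minimality of $\mathcal{A}$. The statement $I_{A}\notin J$ for $A\in\mathcal{A}$ is then immediate: $I_{A}$ has lower rank than $A$ (as noted after Definition~2.11), hence its leader is strictly lower-ranked than $u_{A}$ and strictly lower than every $u_{A_{j}}$ for $j$ such that $\rk A_{j} \ge \rk A$; combined with autoreducedness of $\mathcal{A}$ one checks $I_{A}$ is reduced with respect to $\mathcal{A}$, and it is nonzero, so by part (i) it is not in $J$.

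For part (ii), assume $J$ is prime. The inclusion $[\mathcal{A}]^{\ast}:\Upsilon(\mathcal{A}) \subseteq J$ follows because $\mathcal{A}\subseteq J$, hence $[\mathcal{A}]^{\ast}\subseteq J$, and if $P\cdot Q \in J$ with $Q$ a product of factors $\gamma(I_{A})$, then since each $\gamma(I_{A})\notin J$ (because $I_{A}\notin J$ by part (i) and $J$ is a $\sigma^{\ast}$-ideal, so $\gamma(I_{A})\in J$ would force $I_{A}\in\gamma^{-1}(J)=J$ as $\gamma$ is an automorphism fixing $J$), primality of $J$ gives $Q\notin J$ and therefore $P\in J$. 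The reverse inclusion $J\subseteq [\mathcal{A}]^{\ast}:\Upsilon(\mathcal{A})$ is where the real work lies: given $D\in J$, apply Theorem~3.8 (the reduction theorem \cite[Theorem 2.4.7]{Levin4}) to obtain $J_{0}\in I(\mathcal{A})$ and a remainder $D_{0}$, reduced with respect to $\mathcal{A}$, with $J_{0}D\equiv D_{0}\ (\mathrm{mod}\ [\mathcal{A}]^{\ast})$. Since $[\mathcal{A}]^{\ast}\subseteq J$ and $D\in J$, we get $D_{0}\in J$; but $D_{0}$ is reduced with respect to $\mathcal{A}$, so by part (i), $D_{0}=0$. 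Hence $J_{0}D\in[\mathcal{A}]^{\ast}$. Finally observe that $I(\mathcal{A})$, the set of products of factors $\gamma(I_{A_{i}})$, is precisely $\Upsilon(\mathcal{A})$ (together with $1$), so $J_{0}\in\Upsilon(\mathcal{A})$ and therefore $D\in[\mathcal{A}]^{\ast}:\Upsilon(\mathcal{A})$.

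The main obstacle is the combinatorial bookkeeping in part (i): in the inversive setting the group $\Gamma$ replaces the semigroup $T$ of the non-inversive case, and ``transform'' is defined only within a fixed orthant $(\Gamma Y)_{j}$ (Definition~3.1), so one must be careful that $v=\gamma u$ with $\gamma\in\Gamma_{j}$ is the correct notion, and that the ranking axioms (Definition~3.3(i),(ii)) are applied only to similar terms. The argument that inserting $B$ into $\mathcal{A}$ at the appropriate slot again yields an \emph{autoreduced} set requires verifying reducedness in both directions across orthants, which is exactly where the partial order $\unlhd$ on $\mathbb{Z}^{m}$ and the orthant decomposition must be tracked carefully; the orderliness of the ranking is what guarantees the process terminates and that ranks are well-founded, so that ``lowest rank'' in Definition~3.9 makes sense. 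Everything else — primality manipulations in part (ii), the use of Theorem~3.8, and the identification $I(\mathcal{A})=\Upsilon(\mathcal{A})$ — is formal once part (i) is in hand.
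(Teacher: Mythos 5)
The paper itself gives no proof of this proposition (it is quoted from \cite[Proposition 2.4.8]{Levin4}), so your proposal can only be compared with the standard Ritt--Kolchin-type argument used in the cited source, which is indeed the route you take. Your part (ii) is correct: both inclusions work exactly as you say, using stability of $J$ under $\Gamma$ and its inverses, primality, the reduction theorem (which is Theorem 3.6 of this paper, not 3.8), and part (i) to force the remainder $D_{0}$ to vanish; the deduction $I_{A}\notin J$ is also sound, since every term of $I_{A}$ occurs in $A$ with no larger exponent and all terms of $I_{A}$ are strictly lower than $u_{A}$, so autoreducedness of $\mathcal{A}$ gives that $I_{A}$ is reduced with respect to all of $\mathcal{A}$.

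Two points in your part (i) need repair. First, the case $B\in K$ is incoherent as written: by Definition 3.4 a set meeting $K$ is not autoreduced, so $\{1\}$ cannot witness lower rank, and nothing forces $\mathcal{A}=\emptyset$. Observe instead that a nonzero $B\in K\cap J$ means $J$ is the unit ideal, for which statement (i) must be read as vacuous/excluded (every nonzero constant is reduced with respect to any autoreduced set); the proposition is intended for proper ideals, and in all applications in this paper $J$ is prime. Second, your claim that $\rk B<\rk A_{i}$ whenever $u_{B}$ and $u_{A_{i}}$ are transforms of a common term is false when $u_{B}$ is a \emph{proper} transform of $u_{A_{i}}$: then $u_{B}>u_{A_{i}}$, hence $\rk B>\rk A_{i}$. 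The clean construction needs no case split: since $B$ is reduced with respect to each $A_{i}$, no $A_{i}$ has the same rank as $B$, and any $A_{i}$ with $\rk A_{i}<\rk B$ must have $u_{A_{i}}<u_{B}$ strictly (equal leaders would force $\deg_{u_{B}}B<\deg_{u_{A_{i}}}A_{i}$, i.e.\ $\rk A_{i}>\rk B$); because every transform $\gamma u_{B}$ satisfies $\gamma u_{B}\geq u_{B}>u_{A_{i}}$ by the ranking axioms, such an $A_{i}$ contains no transform of $u_{B}$ at all and is automatically reduced with respect to $B$. Hence $\{A_{i}\,:\,\rk A_{i}<\rk B\}\cup\{B\}$ is autoreduced and, by both clauses of Definition 3.7 (strict drop at the first differing slot, or greater length with equal initial ranks), has lower rank than $\mathcal{A}$, contradicting minimality. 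With these repairs your argument coincides with the proof in the cited source.
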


A $\sigma^{\ast}$-ideal of the ring of $\sigma^{\ast}$-polynomials
$K\{y_{1},\dots, y_{n}\}^{\ast}$ is called {\em linear} if it is
generated (as a $\sigma^{\ast}$-ideal) by homogeneous linear
$\sigma^{\ast}$-polynomials, i. e., $\sigma^{\ast}$-polynomials of
the form $\D\sum_{i=1}^{p}a_{i}\gamma_{i}y_{k_{i}}$ ($a_{i}\in K,
\gamma_{i}\in \Gamma, 1\leq k_{i}\leq n$ for $i=1,\dots, p$). As it
is shown in \cite[Proposition 2.4.9]{Levin4}, every linear
$\sigma^{\ast}$-ideal in $K\{y_{1},\dots, y_{n}\}^{\ast}$ is prime.
A $\sigma^{\ast}$-polynomial is said to be {\em quasi-linear} if it is
linear with respect to its leader.

\begin{thm}
Let $K$ be an inversive difference field with a basic set $\sigma$
and let $\preccurlyeq$ be a preorder on $K\{y_{1},\dots,
y_{n}\}^{\ast}$ such that $A_{1}\preccurlyeq A_{2}$ if and only if
$u_{A_{2}}$ is a transform of $u_{A_{1}}$ and
$\deg_{u_{A_{1}}}A_{1}\leq\deg_{u_{A_{2}}}A_{2}$. Furthermore, let $A$ be
an irreducible $\sigma^{\ast}$-polynomial in $K\{y_{1},\dots,
y_{n}\}^{\ast}\setminus K$ and $\Gamma A = \{\gamma
A\,|\,\gamma\in\Gamma\}$. Then the set $\mathcal{M}$ of all minimal
(with respect to $\preccurlyeq$) elements of $\Gamma A$ is a
characteristic set of the $\sigma^{\ast}$-ideal $[A]^{\ast}$.
\end{thm}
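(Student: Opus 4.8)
The plan is to verify the defining properties of a characteristic set of $J:=[A]^{\ast}$, namely that $\mathcal{M}$ is an autoreduced subset of $J$ and that it has lowest rank among autoreduced subsets of $J$. For the second property I would invoke the following principle, provable by mimicking the classical differential-algebra argument (cf. \cite[Ch.~1, Sect.~9]{Kolchin}): \emph{if $\mathcal{A}$ is an autoreduced subset of a $\sigma^{\ast}$-ideal $J$ and $J$ contains no nonzero $\sigma^{\ast}$-polynomial reduced with respect to $\mathcal{A}$, then $\mathcal{A}$ is a characteristic set of $J$.} Indeed, if $\mathcal{B}=\{B_{1},\dots,B_{q}\}\subseteq J$ were autoreduced with $\rk\mathcal{B}<\rk\mathcal{A}$ (writing $\mathcal{A}=\{A_{1},\dots,A_{p}\}$ in increasing rank), then the first $B_{k}$ with $\rk B_{k}\neq\rk A_{k}$ --- or $B_{p+1}$, if $q>p$ and the first $p$ ranks coincide --- is reduced with respect to every element of $\mathcal{A}$ (with respect to the lower-indexed $A_{i}$ because $\mathcal{B}$ is autoreduced and the ranks agree, and with respect to the remaining ones by a comparison of leaders, using Proposition~3.5), hence is a nonzero element of $J$ reduced with respect to $\mathcal{A}$ --- a contradiction. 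So it remains to prove: (a) $\mathcal{M}\subseteq J$; (b) $\mathcal{M}$ is autoreduced; (c) $J$ contains no nonzero $\sigma^{\ast}$-polynomial reduced with respect to $\mathcal{M}$.

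Part (a) is immediate, since $\mathcal{M}\subseteq\Gamma A$ and, as an ideal, $J=(\Gamma A)$. I would also record at the outset that $J=[M]^{\ast}$ for every $M\in\mathcal{M}$: writing $M=\gamma_{0}A$, one has $\Gamma M=\Gamma\gamma_{0}A=\Gamma A$ because $\Gamma$ is a group, so $(\Gamma M)=(\Gamma A)=J$; this lets me express any element of $J$ as a finite sum $\sum_{i}C_{i}\gamma_{i}A$ with distinct $\gamma_{i}\in\Gamma$.

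The technical heart of the proof is the analysis of the leaders $u_{\gamma A}$. For $\gamma\in\Gamma$, let $t(\gamma)$ denote the term of $A$ with $u_{\gamma A}=\gamma\,t(\gamma)$, so that $\deg_{u_{\gamma A}}(\gamma A)=\deg_{t(\gamma)}A$. Using orderliness of the ranking I would show that $t(\gamma)$ takes only finitely many values (the ``extreme'' terms of $A$), that for each such value $t$ the set $\{\gamma\in\Gamma:t(\gamma)=t\}$ meets every orthant $\Gamma_{j}$ in (a translate of) a submonoid on which $\gamma\mapsto\gamma t$ is strictly isotone for $\unlhd$, and that distinct transforms of $A$ always have distinct leaders (since $A$, having finitely many terms, is fixed by no $\gamma\neq 1$). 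Feeding this into Dickson's lemma --- applied orthant by orthant to the exponent vectors of the leaders --- shows that $\mathcal{M}$ is finite and that the leaders of its distinct elements are pairwise $\unlhd$-incomparable, in particular none a transform of another. This also yields (b): if $M_{1}=\gamma_{1}A$ and $M_{2}=\gamma_{2}A$ were distinct elements of $\mathcal{M}$ and some transform $\rho\,u_{M_{2}}$ (with $\rho\sim u_{M_{2}}$) occurred in $M_{1}$ to a power $\geq\deg_{u_{M_{2}}}M_{2}$, then, being a term of $M_{1}=\gamma_{1}A$, it equals $\gamma_{1}s$ for a term $s$ of $A$ with $\deg_{s}A\geq\deg_{u_{M_{2}}}M_{2}$, and tracing this through the leader description would produce a transform of $A$ strictly $\preccurlyeq$-below $M_{1}$ or $M_{2}$, contradicting minimality.

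For (c), suppose $D\in J\setminus\{0\}$ is reduced with respect to $\mathcal{M}$; write $D=\sum_{i}C_{i}\gamma_{i}A$ and choose the representation so that the highest term $w$ occurring in the fully expanded sum is minimal and, among such representations, with the fewest summands attaining $w$. A Buchberger-type cancellation argument --- in which the leading-term collisions among the $C_{i}\gamma_{i}A$ are resolved through $S$-polynomial relations between the $\gamma_{i}A$, which close up precisely because all $\gamma_{i}A$ are transforms of the single polynomial $A$ --- forces the leading monomial of $D$ to be divisible by the leading monomial of some $\gamma_{i_{0}}A$, hence by $(u_{\gamma_{i_{0}}A})^{d}$, where $d=\deg_{u_{\gamma_{i_{0}}A}}(\gamma_{i_{0}}A)$. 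Choosing $M'=\gamma'A\in\mathcal{M}$ with $M'\preccurlyeq\gamma_{i_{0}}A$, we get $u_{\gamma_{i_{0}}A}=\rho\,u_{M'}$ for some $\rho\sim u_{M'}$ with $\deg_{u_{M'}}M'\leq d$, so the leading monomial of $D$ is divisible by $(\rho\,u_{M'})^{\deg_{u_{M'}}M'}$; hence $D$ contains the transform $\rho\,u_{M'}$ of $u_{M'}$ to a power $\geq\deg_{u_{M'}}M'$, i.e. $D$ is not reduced with respect to $M'\in\mathcal{M}$ --- a contradiction. This establishes (c) and completes the argument. The hardest step, I expect, is the structural description of the leaders $u_{\gamma A}$ obtained above: that applying a transform can make the leader ``jump'' to a different term lying in a different orthant is exactly the phenomenon that must be controlled there, and it also underlies the delicacy of the cancellation argument in (c).
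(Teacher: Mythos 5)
Your framework---show (a) $\mathcal{M}\subseteq[A]^{\ast}$, (b) $\mathcal{M}$ is autoreduced, (c) $[A]^{\ast}$ contains no nonzero $\sigma^{\ast}$-polynomial reduced with respect to $\mathcal{M}$, and then invoke the standard ``lowest rank'' principle---is the right one and is essentially what the paper does implicitly; (a), (b) and the principle are unproblematic. The genuine gap is in (c), which carries the whole weight of the theorem. Your Buchberger-type step claims that, in a representation $D=\sum_i C_i\gamma_i A$ chosen with minimal top term, the leading-term collisions can always be resolved through $S$-polynomial relations among the $\gamma_i A$ ``because they are transforms of a single polynomial,'' so that the leading monomial of $D$ is divisible by the leading monomial of some $\gamma_{i_0}A$. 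If that mechanism worked it would show that $\Gamma A$ is a Gr\"obner basis of the ideal it generates, and that is false. Take $m=n=1$, $\sigma=\{\alpha\}$, $A=y\,\alpha y+1$ (irreducible, leader $\alpha y$). Then
$$\alpha^{2}y\cdot A-y\cdot\alpha A=\alpha^{2}y-y\in[A]^{\ast},$$
and its leading monomial $\alpha^{2}y$ is divisible by the leading monomial of no $\gamma A$ (each of those is a product $\alpha^{k+1}y\cdot\alpha^{k}y$ of two distinct terms); equivalently, the $S$-polynomial of $A$ and $\alpha A$ does not ``close up.'' This element is not a counterexample to the theorem (it contains $\alpha^{2}y$, a transform of $u_A$, so it is not reduced with respect to $\mathcal{M}$), but it does refute the intermediate divisibility claim on which your proof of (c) rests---and that claim is exactly where the difficulty of the theorem is hidden.

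What is true, and what the paper actually uses, is the much weaker Ritt-style lemma \cite[Theorem 2.4.13]{Levin4}: if a nonzero $M\in[A]^{\ast}$ is written as $\sum_{i=1}^{l}C_i\gamma_iA$ with distinct $\gamma_i$, then $\deg_{u_{\gamma_kA}}M\geq\deg_{u_{\gamma_kA}}(\gamma_kA)$ for some $k$---a statement about the degree of $M$ in the leader of some transform, not about the leading monomial of $M$. Its proof depends essentially on the irreducibility of $A$ (this is precisely why the paper's Proposition 3.10 must argue that a quasi-linear $A$ is still irreducible as a polynomial in its leader); your sketch never uses irreducibility anywhere, which is a further sign the cancellation argument cannot be correct as stated. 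Once that lemma is available, the paper's proof is a two-line deduction: pick $\gamma A\in\mathcal{M}$ with $\gamma A\preccurlyeq\gamma_kA$; then $M$ is not reduced with respect to $\gamma A$, giving (c). To repair your write-up you would need either to cite that lemma or to reprove it, and the latter is a substantive task that the Gr\"obner-basis heuristic does not replace.
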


\begin{proof}
By \cite[Theorem 2.4.13]{Levin4}, if $M$ is a nonzero
$\sigma^{\ast}$-polynomial in $[A]^{\ast}$ written as $M =
\D\sum_{i=1}^{l}C_{i}A_{i}$ \, {\em ($l\geq 1$)}, where $C_{i}\in
K\{y_{1},\dots, y_{n}\}^{\ast}$ {\em ($1\leq i\leq l$)} and $A_{i} =
\gamma_{i}A$ for some distinct elements $\gamma_{1},\dots,
\gamma_{l}\in \Gamma$, then
$\deg_{u_{A_{k}}}M\geq\deg_{u_{A_{k}}}A_{k}$ for some $k$, $1\leq
k\leq l$. It follows that if $\gamma A$ is an element of
$\mathcal{M}$ such that $\gamma A\preccurlyeq A_{k}$, then $M$ is
not reduced with respect to $\gamma A$. Thus, the
$\sigma^{\ast}$-ideal $[A]^{\ast}$ contains no nonzero
$\sigma^{\ast}$-polynomial reduced with respect to $\mathcal{M}$, so
$\mathcal{M}$ is a characteristic set of this ideal.
\end{proof}

\begin{prop}
Let $K$ be an inversive difference field with a basic set $\sigma =
\{\alpha_{1},\dots, \alpha_{m}\}$, $R = K\{y_{1},\dots,
y_{n}\}^{\ast}$ the ring of $\sigma^{\ast}$-polynomials in
$\sigma^{\ast}$-indeterminates $y_{1},\dots, y_{n}$ over $K$, and
$A$ a quasi-linear (not necessarily irreducible)
$\sigma^{\ast}$-polynomial in $R\setminus K$ with leader $u_{A}$.
Furthermore, let $M$ be a nonzero $\sigma^{\ast}$-polynomial in the
ideal $[A]^{\ast}$ of $R$ written in the form $M =
\D\sum_{i=1}^{l}C_{i}A_{i}$ \, {\em ($l\geq 1$)} where $C_{i}\in
K\{y_{1},\dots, y_{n}\}^{\ast}$ {\em ($1\leq i\leq l$)} and $A_{i} =
\gamma_{i}A$ for some distinct elements $\gamma_{1},\dots,
\gamma_{l}\in \Gamma$ such that $\gamma_{i}\sim u_{A}$ for
$i=1,\dots, l$. Finally, let $u_{i}$ denote the leader of the
$\sigma^{\ast}$-polynomial $A_{i}$ {\em ($i=1,\dots, l$)}. Then
there exists $\nu\in\mathbb{N}$, $1\leq \nu\leq l$, such that
$\deg_{u_{\nu}}M\geq 1$.
\end{prop}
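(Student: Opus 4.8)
The plan is to argue by contradiction: suppose that for every $\nu$ with $1\le\nu\le l$ we have $\deg_{u_\nu}M=0$, i.e. $M$ does not involve any of the leaders $u_1,\dots,u_l$. Since $A$ is quasi-linear, each $A_i=\gamma_i A$ is linear in its own leader $u_i=\gamma_i u_A$, so we can write $A_i=J_i u_i+R_i$ where $J_i=\gamma_i(I_A)\in K\{y_1,\dots,y_n\}^\ast$ does not contain $u_i$ and $R_i$ does not contain $u_i$ either. The first step is to single out, among the finitely many terms $u_1,\dots,u_l$, one that is maximal with respect to the ranking $\le$; after renumbering, say $u_1$ is this maximal term (if several $A_i$ share this leader we will have to be a little careful, see below). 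Because the ranking is orderly and the $\gamma_i$ are distinct elements of $\Gamma$ all similar to $u_A$, the leaders $u_i$ are pairwise distinct (distinct transforms of $u_A$), so $u_1$ is strictly larger than every other $u_j$.

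Next I would track where $u_1$ can appear on the right-hand side of $M=\sum_{i=1}^l C_iA_i$. It appears in the term $C_1A_1=C_1(J_1u_1+R_1)$, contributing $C_1J_1u_1$ plus terms of lower rank in $u_1$ (namely from $C_1R_1$, which has no $u_1$, together with whatever powers of $u_1$ sit inside $C_1$). For $j\ge2$, the factor $A_j=\gamma_j A$ can still contain $u_1$ as a \emph{proper transform} of its own leader $u_j$ — this is exactly the phenomenon that makes the difference (as opposed to polynomial) setting subtle. However, the key observation is that $A_j$ is reduced-in-$u_1$ to degree at most $\deg_{u_{A}}A-1$ in the relevant sense is false in general; instead the right tool is the degree bookkeeping: in $A_j$ the term $u_1$ (if it occurs) occurs to some bounded degree, but $u_j<u_1$ means that when we compute $\deg_{u_1}(C_jA_j)$ the contribution is governed by $\deg_{u_1}C_j+\deg_{u_1}A_j$, and there is no cancellation forced a priori. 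So a naive "leading term'' argument does not immediately close. The correct route is to invoke \cite[Theorem 2.4.13]{Levin4} (quoted in the proof of Theorem 3.9 above): that theorem asserts precisely that for any nonzero $M\in[A]^\ast$ of this form, $\deg_{u_{A_k}}M\ge\deg_{u_{A_k}}A_k$ for some $k$. Since $A$ is quasi-linear, $\deg_{u_{A_k}}A_k=\deg_{u_A}A=1$, whence $\deg_{u_k}M\ge1$, which is exactly the assertion with $\nu=k$.

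Thus the cleanest proof is simply: by quasi-linearity, $\deg_{u_A}A=1$, so $\deg_{u_i}A_i=\deg_{u_i}(\gamma_iA)=\deg_{u_A}A=1$ for each $i$; apply \cite[Theorem 2.4.13]{Levin4} to the representation $M=\sum_{i=1}^l C_iA_i$ to obtain an index $k$ with $\deg_{u_k}M\ge\deg_{u_k}A_k=1$; set $\nu=k$. The main point I would emphasise in writing this up is verifying that the hypotheses of \cite[Theorem 2.4.13]{Levin4} are met — in particular that the $\gamma_i$ are distinct and all similar to $u_A$ (both given), and that the statement of that theorem does not require $A$ to be irreducible (it does not; only Theorem 3.9 invoked irreducibility, for a different conclusion). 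The only genuine obstacle, therefore, is bookkeeping: making sure the degree identity $\deg_{u_i}(\gamma_iA)=\deg_{u_A}A$ is legitimate — it is, because $\gamma_i$ acts as an automorphism sending the single term $u_A$ to the single term $\gamma_i u_A=u_i$ and permuting all other terms among terms $\ne u_i$ — and then simply citing the already-available theorem. If one prefers a self-contained argument rather than citing \cite[Theorem 2.4.13]{Levin4}, one would redo its proof: induct on the rank of $M$, using that if $\deg_{u_k}M<\deg_{u_k}A_k$ for all $k$ then the term of highest rank among all $u_i$, say $u_1$, forces $\deg_{u_1}(C_1A_1)$ to be visible in $M$ unless $C_1$ has negative... — this recursion is the "hard part'', but it is exactly the content of the cited theorem, so I would not reproduce it.
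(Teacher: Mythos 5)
There is a genuine gap, and it is precisely at the step you wave through. Your ``cleanest proof'' is to apply \cite[Theorem 2.4.13]{Levin4} directly, asserting parenthetically that its statement ``does not require $A$ to be irreducible.'' That assertion is wrong: the cited theorem is stated for an \emph{irreducible} $\sigma^{\ast}$-polynomial $A$ (this is also why Theorem 3.9 above carries the irreducibility hypothesis), whereas the $A$ of the present proposition is only quasi-linear and may perfectly well be reducible as a $\sigma^{\ast}$-polynomial --- e.g.\ $A=y_{2}\,\alpha_{1}y_{1}+y_{2}=y_{2}(\alpha_{1}y_{1}+1)$ is quasi-linear with leader $\alpha_{1}y_{1}$ but factors in $R$. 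So a bare citation is not legitimate; the whole point of the proposition is to extend the cited result beyond the irreducible case, and your proposal removes that obstacle by fiat rather than by argument. Your fallback (``redo its proof by induction on rank'') is explicitly not carried out, and your opening contradiction/leading-term sketch is abandoned midway, so nothing in the proposal actually closes this hole.

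The paper's proof supplies exactly the missing bridge: although $A$ need not be irreducible in $R$, quasi-linearity makes $A$ irreducible as a \emph{univariate} polynomial in $u_{A}$ over the field of rational functions in the other terms appearing in $A$ (it is linear in $u_{A}$ with nonzero coefficient), and an inspection of the proof of \cite[Theorem 2.4.13]{Levin4} shows that this weaker property is all that proof uses; hence its argument applies verbatim and yields an index $k$ with $\deg_{u_{k}}M\geq\deg_{u_{k}}A_{k}=1$. The rest of your reduction --- that $\deg_{u_{i}}(\gamma_{i}A)=\deg_{u_{A}}A=1$ because $\gamma_{i}$ acts as an automorphism sending $u_{A}$ to $u_{i}$ --- is fine and coincides with the paper's intended use, but without the observation about irreducibility in the leader (or a self-contained reworking of the cited proof) your argument is incomplete.
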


\begin{proof}
Note that even though the quasi-linear $\sigma^{\ast}$-polynomial
$A$ is not necessarily irreducible, it is irreducible as a
polynomial in its leader $u_{A}$ over the field of rational
functions in other terms of $A$. It follows that one can use the
arguments of the proof of \cite[Theorem 2.4.13]{Levin4}; this
theorem assumes that $A$ is irreducible, but the proof actually uses
only the fact that $A$ is irreducible as a univariate polynomial in
$u_{A}$ whose coefficients are rational functions of the other terms
of $A$. With this remark, our theorem becomes a consequence of
\cite[Theorem 2.4.13]{Levin4}.
\end{proof}

\begin{prop}
With the notation of the last proposition, let $A$ be a quasi-linear
$\sigma^{\ast}$-polynomial in $K\{y_{1},\dots,
y_{n}\}^{\ast}\setminus K$ of the form $A = au_{A} +B$ where $a\in
K$ and $B\in K\{y_{1},\dots, y_{n}\}^{\ast}$ (all terms of $B$ are
smaller than $u_{A}$). Then the $\sigma^{\ast}$-ideal $[A]^{\ast}$
of $K\{y_{1},\dots, y_{n}\}^{\ast}$ is prime. Furthermore, the set
$\mathcal{M}$ of all minimal (with respect to $\preccurlyeq$)
elements of $\Gamma A$ (we use the notation of Theorem 3.9) is a
characteristic set of $[A]^{\ast}$.
\end{prop}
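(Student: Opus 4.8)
The statement has two parts: (1) the $\sigma^{\ast}$-ideal $[A]^{\ast}$ is prime, and (2) the set $\mathcal{M}$ of $\preccurlyeq$-minimal elements of $\Gamma A$ is a characteristic set of $[A]^{\ast}$. For part (2), I would argue exactly as in the proof of Theorem 3.9, now invoking Proposition 3.10 in place of \cite[Theorem 2.4.13]{Levin4}: if $M$ is a nonzero $\sigma^{\ast}$-polynomial in $[A]^{\ast}$, write $M = \sum_{i=1}^{l} C_{i}A_{i}$ with $A_{i} = \gamma_{i}A$, $\gamma_{i}\sim u_{A}$; by Proposition 3.10 there is an index $\nu$ with $\deg_{u_{\nu}}M \geq 1 = \deg_{u_{A_{\nu}}}A_{\nu}$ (the last equality because $A$ is linear in its leader, so every transform $A_{\nu}$ is too). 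Hence $M$ is not reduced with respect to $A_{\nu}$, and a fortiori not reduced with respect to the element of $\mathcal{M}$ that $\preccurlyeq$-precedes $A_{\nu}$ (such an element exists since $\mathcal{M}$ collects the minimal elements). Thus $[A]^{\ast}$ contains no nonzero $\sigma^{\ast}$-polynomial reduced with respect to $\mathcal{M}$, which — combined with the fact that $\mathcal{M}$ consists of transforms of $A$, hence lies in $[A]^{\ast}$ — forces $\mathcal{M}$ to be a characteristic set. One subtlety to verify en route is that the elements of $\mathcal{M}$ actually form an autoreduced set (distinct leaders, each reduced modulo the others), which follows because $A$ has degree one in its leader, so no transform $\gamma u_{A}$ can occur to a power $\geq 1$ in another minimal element without violating minimality with respect to $\preccurlyeq$.

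For part (1), the primeness of $[A]^{\ast}$, I would proceed as follows. Since $A = au_{A} + B$ with $a \in K^{\times}$, after dividing by $a$ we may assume $A = u_{A} + B$ is monic linear in its leader. The $\sigma^{\ast}$-ideal $[A]^{\ast}$ is the ordinary ideal generated by $\{\gamma A \mid \gamma \in \Gamma\}$. The key point is that the generators $\gamma A = \gamma u_{A} + \gamma B$ each have leader $\gamma u_{A}$ (of the same order as $\gamma$) appearing linearly, and distinct $\gamma$ give distinct leaders. Ordering the terms $\Gamma Y$ by the ranking, one can view the polynomial ring $K\{y_{1},\dots,y_{n}\}^{\ast}$ as a polynomial ring over $K$ in the terms, and the set $\{\gamma A\}$ as a collection of polynomials each linear and monic in a distinct variable $\gamma u_{A}$, with the remaining variables free. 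Such a system generates a prime ideal: quotienting by $[A]^{\ast}$ is the same as substituting $\gamma u_{A} \mapsto -\gamma B$ for every $\gamma \in \Gamma$ (which is consistent because $\alpha(\gamma A) = (\alpha\gamma)A$, so the substitutions are $\sigma$-equivariant), and the quotient is isomorphic to a polynomial ring over $K$ in the remaining terms, hence an integral domain. More precisely, I would cite \cite[Proposition 2.4.9]{Levin4}, which already establishes that every linear $\sigma^{\ast}$-ideal is prime: although $A$ need not be homogeneous, the same reduction/substitution argument applies verbatim to an inhomogeneous linear $\sigma^{\ast}$-polynomial, because the leader still occurs linearly and monically and the substitution eliminating it is well-defined. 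Alternatively, since $\mathcal{M}$ is a characteristic set of $[A]^{\ast}$ (part (2)), one shows directly that the ring $R/[A]^{\ast}$ embeds in $K\langle\{\gamma y_{j} : \gamma y_{j} \text{ not a transform of } u_{A}\}\rangle^{\ast}$ — a $\sigma^{\ast}$-field of inversive difference polynomials — via sending each leader-transform to its reduced expression, giving an integral domain.

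**Main obstacle.** The technical heart is justifying the substitution $\gamma u_{A} \mapsto -\gamma B$ is well-defined and that the resulting quotient ring is a polynomial ring over $K$. The difficulty is bookkeeping: $B$ may itself involve transforms $\delta u_{A}$ with $\delta \sim u_{A}$ but $\delta u_{A} < u_{A}$ (e.g. if $u_{A} = \alpha_1 y_1$ and $B$ contains $y_1$), so the substitutions are not independent — one must check that iterating the substitutions terminates and is confluent. This is handled by the well-ordering of $\Gamma Y$: order the leaders $\gamma u_{A}$ by rank and substitute from the bottom up; since $B$ only involves terms strictly below $u_{A}$, each $\gamma B$ only involves terms strictly below $\gamma u_{A}$ (using ranking condition (ii), that $\gamma$ preserves the order within each orthant), so the substitution for $\gamma u_{A}$ never reintroduces $\gamma u_{A}$ or any higher leader-transform. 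This guarantees the substitution map $R \to K[\{\text{non-leader terms}\}]$ is well-defined, its kernel is exactly $[A]^{\ast}$, and since the target is an integral domain, $[A]^{\ast}$ is prime. The rest is routine. I expect the cleanest write-up simply invokes \cite[Proposition 2.4.9]{Levin4} after remarking that its proof does not use homogeneity of the linear generators, only linearity in the leader.
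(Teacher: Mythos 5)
Your treatment of part (2) is close to the paper's in spirit, but the heart of the statement is primeness, and there your substitution argument has a genuine gap precisely at the point the paper's proof is built to handle: the behaviour of transforms of $A$ \emph{across orthants}. Ranking axiom (ii) of Definition 3.2 only says that $u<v$ implies $\gamma u<\gamma v$ when $u,v\in(\Gamma Y)_{j}$ \emph{and} $\gamma\in\Gamma_{j}$; for $\gamma$ outside the orthant $\Gamma_{j}$ of $u_{A}$ it is simply false that ``each $\gamma B$ only involves terms strictly below $\gamma u_{A}$''. For such $\gamma$ the term $\gamma u_{A}$ need not be the leader of $\gamma A$, the leader's coefficient need not lie in $K$, and the leader need not occur linearly. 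Concretely, take $m=1$ and $A=\alpha y+y+(\alpha^{-1}y)^{2}$: then $\alpha^{-1}A=y+\alpha^{-1}y+(\alpha^{-2}y)^{2}$ has leader $\alpha^{-2}y$ occurring to degree $2$. So your bottom-up substitution $\gamma u_{A}\mapsto -a^{-1}\gamma B$ is not triangular and the claimed quotient description fails: in this example the orbit $\Gamma u_{A}$ is the whole set of terms, so ``the polynomial ring in the remaining terms'' would be $K$, whereas $R/[A]^{\ast}$ has positive dimension polynomial (its characteristic set is $\{A,\alpha^{-1}A\}$ with leaders $\alpha y,\alpha^{-2}y$). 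For the same reason the appeal to \cite[Proposition 2.4.9]{Levin4} does not apply: $A$ is only quasi-linear, and its sub-leader part $B$ is an arbitrary polynomial (cf.\ equation (4.15), which contains $ay\,\alpha_{1}y$ and $G(y)$), so $[A]^{\ast}$ is not a linear $\sigma^{\ast}$-ideal. The paper's proof circumvents all of this: given $FE\in[A]^{\ast}$, it first applies a sufficiently deep $\gamma\in\Gamma_{j}$ so that all terms of $\gamma F$, $\gamma E$, of the coefficients $\gamma C_{i}$, and of the transforms $\gamma\gamma_{i}A$ land in the single orthant $(\Gamma Y)_{j}$ (harmless, since $\gamma$ is injective); inside that orthant every $\beta A$ with $\beta\in\Gamma_{j}$ really is linear with unit coefficient in its leader $\beta u_{A}$, so one can eliminate all such leaders from $F$ and $E$, and the product $F_{1}E_{1}$ --- a combination of polynomials $\beta A$ with $\beta\sim u_{A}$ containing no transform of $u_{A}$ --- contradicts Proposition 3.10.

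There is also a smaller gap in your part (2): Proposition 3.10 only covers representations $M=\sum_{i}C_{i}\gamma_{i}A$ with all $\gamma_{i}\sim u_{A}$, and an arbitrary nonzero $M\in[A]^{\ast}$ need not admit one (e.g.\ $M=\alpha^{-1}A$ above), so you cannot invoke it for every element of the ideal. The repair is the one the paper uses: since the initial of $A$ is $a\in K$, a unit, $A$ is irreducible in $K\{y_{1},\dots,y_{n}\}^{\ast}$, so Theorem 3.9 --- which rests on \cite[Theorem 2.4.13]{Levin4} with no similarity restriction on the $\gamma_{i}$ --- applies verbatim and gives that $\mathcal{M}$ is a characteristic set; no separate argument via Proposition 3.10 is needed for this part.
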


\begin{proof}
Without loss of generality we can assume that $a=1$. Let
$u_{A}\in(\Gamma Y)_{j}$ ($1\leq j\leq 2^{m}$).

If $FE\in [A]^{\ast}$ ($F, E\in R$), then $FE$ can be written as $FE
= \D\sum_{i=1}^{s}C_{i}\gamma_{i}A$ where $C_{i}\in R$ and
$\gamma_{i}\in\Gamma$ ($1\leq i\leq s$). Then one can take
$\gamma\in\Gamma_{j}$ such that all terms of the
$\sigma^{\ast}$-polynomials $\gamma F$, $\gamma G$, $\gamma C_{i}$
and $\gamma\gamma_{i}A$ ($1\leq i\leq s$) lie in $(\Gamma Y)_{j}$.
Applying $\gamma$ to the last equality we obtain that $(\gamma
F)(\gamma E) = \D\sum_{i=1}^{s}(\gamma C_{i})(\gamma\gamma_{i}A)$.
Thus, without loss of generality we can assume that all terms of
$F$, $E$ and $C_{i}$ ($1\leq i\leq s$) in the representation $FE =
\D\sum_{i=1}^{s}C_{i}\gamma_{i}A$ belong to $(\Gamma Y)_{j}$ and
$\gamma_{i}\sim u_{A}$ for $i=1,\dots, s$. Now one can subtract from
$F$ some linear combination of elements of the form $\beta A$ with
$\beta\in\Gamma_{j}$ to eliminate all transforms of $u_{A}$ in $F$.
We obtain that $F\equiv F_{1}\, \, (mod\,(\{\beta
A\,|\,\beta\in\Gamma_{j}\})\,)$ where $F_{1}$ does not contain any
$\gamma u_{A}$ with $\gamma\in\Gamma_{j}$. Similarly, $E\equiv
E_{1}\, \, (mod\,(\{\beta A\,|\,\beta\in\Gamma_{j}\})\,)$ where
$E_{1}$ does not contain any $\gamma u_{A}$ with
$\gamma\in\Gamma_{j}$. If $F, E\notin [A]^{\ast}$, then $F_{1},
E_{1}\notin [A]^{\ast}$, but $F_{1}E_{1}\in [A]^{\ast}$, since
$FE\in [A]^{\ast}$. At the same time, $F_{1}E_{1}$ does not contain
any transform of $u_{A}$ and can be written as a linear combination
of elements of the form $\beta A$ where $\beta\sim u_{A}$. We get a
contradiction with the statement of Proposition 3.10, so our
proposition is proved. (The last statement is a direct consequence
of Theorem 3.9, since the $\sigma^{\ast}$-polynomial $A$ is
irreducible.)
\end{proof}

The following result, proved in \cite[Theorem 4.2.5]{Levin4}
introduces a Hilbert-type dimension polynomial associated with a
prime $\sigma^{\ast}$-ideal of a ring of
$\sigma^{\ast}$-polynomials.

\begin{thm}
Let $K$ be an inversive difference field with a basic set of
automorphisms $\sigma = \{\alpha_{1},\dots, \alpha_{m}\}$,
$R=K\{y_{1},\dots, y_{n}\}^{\ast}$ the ring of
$\sigma^{\ast}$-polynomials over $K$, and $P$ a prime
$\sigma^{\ast}$-ideal of $R$. Let $L$ denote the quotient field of
$R/P$ treated as the $\sigma^{\ast}$-field extension
$K\langle\eta_{1},\dots,\eta_{n}\rangle^{\ast}$ of $K$ where
$\eta_{i}$ ($1\leq i\leq n$) is the canonical image of $y_{i}$ in
$R/P$. Then there exists a polynomial $\psi_{\eta |
K}(t)\in\mathbb{Q}[t]$ with the following properties.

\smallskip

{\em (i)}\,  $\psi_{\eta | K}(r) = \trdeg_{K}K(\{\gamma \eta_{j} |
\gamma \in \Gamma(r), 1\leq j\leq n\})$  for all sufficiently large
$r\in\mathbb{N}$.

\smallskip

{\em (ii)}\, $\deg\,\psi_{\eta | K}(t)\leq m$ and the polynomial
$\psi_{\eta | K}(t)$ can be written as
\begin{equation}\psi_{\eta | K}(t) = {\frac{2^{m}a}{m!}}t^{m} + o(t^{m})\end{equation}
where $a\in\mathbb{Z}$ and $o(t^{m})$ is a polynomial of degree less
than $m$.

\smallskip

{\em (iii)}\, The integers $a$, $d = deg\,\psi_{\eta | K}(t)$ and
the coefficient of $t^{d}$ in the polynomial $\psi_{\eta | K}(t)$ do
not depend on the choice of a system of $\sigma$-generators $\eta$.
Furthermore, $a = \sigma$-$\trdeg_{K}L$.

\smallskip

{\em (iv)}\, Let $\mathcal{A} = \{A_{1},\dots, A_{p}\}$ be a
characteristic set of the $\sigma^{\ast}$-ideal $P$ and for every
$i=1,\dots, n$, let
$$E_{i} = \{(e_{i1},\dots,
e_{im})\in\mathbb{Z}^{m}\,|\,\alpha_{1}^{e_{i1}}\dots\alpha_{m}^{e_{im}}y_{i}\,\,\,
\text{is the leader of some element of}\,\,\,  \mathcal{A}\}$$ (of
course, some sets $E_{i}$ might be empty). Then
$$\psi_{\eta|K}(t) = \sum_{i=1}^{n}\phi_{E_{i}}(t)$$ where
$\phi_{E_{i}}(t)$ is the dimension polynomial of the set
$E_{i}\subseteq\mathbb{Z}^{m}$ whose existence is established by
Theorem 2.5.
\end{thm}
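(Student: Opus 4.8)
The plan is to fix a characteristic set $\mathcal{A}=\{A_{1},\dots,A_{p}\}$ of $P$ and to show that for all sufficiently large $r\in\mathbb{N}$ the images in $L$ of the terms $\gamma y_{i}$ with $\ord\gamma\leq r$ whose exponent $m$-tuple lies in $W_{E_{i}}$ form a transcendence basis of $K(\{\gamma\eta_{j}\,|\,\gamma\in\Gamma(r),\,1\leq j\leq n\})$ over $K$. Granting this, the number of such terms is $\sum_{i=1}^{n}\Card W_{E_{i}}(r)$, which coincides with $\sum_{i=1}^{n}\phi_{E_{i}}(r)$ for $r\gg 0$ by Theorem 2.5(i); so $\psi_{\eta|K}(t):=\sum_{i=1}^{n}\phi_{E_{i}}(t)$ satisfies (i), while (iv) is then its definition. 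Property (ii) follows on summing, since by Theorem 2.5(ii) the coefficient of $t^{m}$ in each $\phi_{E_{i}}$ is an integer multiple of $2^{m}/m!$; and (iii), including $a=\sigma$-$\trdeg_{K}L$, is the general fact that the map $r\mapsto\trdeg_{K}K(\{\gamma\eta_{j}\,|\,\gamma\in\Gamma(r)\})$ depends only on $L/K$ and that, writing $L$ as a $\sigma$-algebraic extension of $K\langle B\rangle^{\ast}$ for a $\sigma$-transcendence basis $B$, one gets $\psi_{\eta|K}(r)=(\Card B)\,\Card\Gamma(r)+O(r^{m-1})$, where $\Card\Gamma(r)$ is a polynomial in $r$ of degree $m$ with leading coefficient $2^{m}/m!$, so that $a=\Card B=\sigma$-$\trdeg_{K}L$ (the details of (ii)--(iii) are in \cite[Theorem 4.2.5]{Levin4}).

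For the transcendence-basis claim, the easy half is algebraic independence over $K$: a nonzero polynomial relation with coefficients in $K$ among finitely many of these images, regarded as an element $g\in R$, would be a nonzero member of $P$ no term of which is a transform of any leader $u_{A_{k}}$; such a $g$ is vacuously reduced with respect to $\mathcal{A}$, contradicting Proposition 3.8(i). This already yields $\psi_{\eta|K}(r)\geq\sum_{i}\Card W_{E_{i}}(r)$ for every $r$.

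The substantive half is that the image of every term $\gamma y_{i}$ is algebraic over $K$ adjoined with the images of the non-leader terms; for the exact polynomial one needs the sharper statement that a term of order $\leq r$ which is a transform of a leader is already algebraic over $K$ adjoined with the images of the non-leader terms of order $\leq r$. If $\gamma y_{i}=\delta u_{A_{k}}$ with $\delta\sim u_{A_{k}}$, then $\delta A_{k}\in P$; since $\delta$ is an automorphism with $\delta(P)=P$ while $I_{A_{k}}\notin P$ by Proposition 3.8(i), the coefficient $\delta(I_{A_{k}})$ of $(\delta u_{A_{k}})^{\deg_{u_{A_{k}}}A_{k}}$ in $\delta A_{k}$ is not in $P$, so the relation $\delta A_{k}=0$ in $L$ makes $\gamma\eta_{i}$ algebraic over $K$ adjoined with the images of the remaining terms of $\delta A_{k}$, that is, of the terms $\delta w$ with $w$ a term of $A_{k}$ of rank lower than $u_{A_{k}}$. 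Here orderliness enters: within a single orthant a term of lower rank has order no larger than a term of higher rank, and by ranking axiom (ii) the $\Gamma_{j}$-action preserves rank comparisons, so every term of $\delta A_{k}$ lying in the orthant of $\delta u_{A_{k}}$ has both order and rank strictly below those of $\gamma y_{i}$; an induction over the well-ordered set of terms then brings everything down to non-leader terms. The hard part will be controlling the terms of $\delta A_{k}$ lying in orthants other than that of $\delta u_{A_{k}}$, for which the order estimate is no longer automatic; this is handled by running the argument orthant by orthant, using that the restriction of the whole apparatus---terms, ranking, reduction, characteristic sets---to a fixed $(\Gamma Y)_{j}$ reproduces the theory for $m$ ordinary difference operators, in which the leader of a polynomial is its term of greatest order. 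Combining the two halves gives the transcendence-basis claim, and with it the theorem.
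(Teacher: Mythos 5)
You should first note that the paper gives no proof of this statement at all: Theorem 3.12 is quoted from \cite[Theorem 4.2.5]{Levin4}, so your sketch has to be measured against the proof given there, which indeed follows the strategy you outline. Your easy half is fine: the images of the terms $\gamma y_i$ of order at most $r$ whose exponent vectors lie in $W_{E_i}$ are algebraically independent over $K$, because a nonzero relation would be a nonzero element of $P$ containing no transform of any leader, hence reduced with respect to $\mathcal{A}$, contradicting Proposition 3.8(i); and the identification of the count $\sum_{i}\Card W_{E_i}(r)$ with $\sum_i\phi_{E_i}(r)$ via Theorem 2.5 is correct, as is the remark that $\delta(P)=P$ forces $\delta(I_{A_k})\notin P$.

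The genuine gap is exactly at the point you yourself label ``the hard part,'' and what you offer there is not an argument. What has to be proved is that a transform $\delta u_{A_k}$ ($\delta\sim u_{A_k}$) of order at most $r$ is algebraic over the field generated by images of terms of order at most $r$ and of strictly lower rank, so that your induction along the well-ordering closes inside $\Gamma(r)$; the delicate terms are the $\delta w$ for non-leader terms $w$ of $A_k$ lying in orthants other than that of $u_{A_k}$. Your remedy --- ``running the argument orthant by orthant'' because the restriction of the apparatus to a fixed $(\Gamma Y)_j$ ``reproduces the theory for $m$ ordinary difference operators'' --- does not meet this difficulty: the polynomial $\delta A_k$ has terms in several orthants simultaneously, so no single orthant contains the relation you are exploiting, and neither the paper nor your sketch establishes such a dictionary (nor is it true in the present setup that the leader is simply the term of greatest order). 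What actually closes the gap for the standard orderly ranking is elementary but must be stated: the first component in the ranking comparison is the order, so lower rank implies order $\leq$ globally, and for every non-leader term $w$ of $A_k$ and every $\delta\sim u_{A_k}$ one has $\ord(\delta w)\leq\ord\delta+\ord w\leq\ord\delta+\ord u_{A_k}=\ord(\delta u_{A_k})$, with the remaining lexicographic components resolving the case of equal orders, so that $\delta w<\delta u_{A_k}$ and $\ord(\delta w)\leq r$; only with this (or the corresponding statements of \cite[Section 2.4]{Levin4}) do all terms introduced by the reduction stay in $\Gamma(r)$ and strictly drop in rank, making the induction legitimate. Finally, you defer the details of (ii)--(iii) to \cite[Theorem 4.2.5]{Levin4}, i.e., to the very theorem being proved; that mirrors the paper's own practice of quoting the result, but it means your sketch is not an independent proof of those parts.
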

The polynomial $\psi_{\eta | K}(t)$ whose existence is established
by Theorem 3.12 is called the {\em $\sigma^{\ast}$-dimension
polynomial} of the prime $\sigma^{\ast}$-ideal $P$. The last
statement of Theorem 3.12, together with Theorems 2.6 and 2.4, gives
a method of computation of the $\sigma^{\ast}$-dimension polynomial
associated with a prime $\sigma^{\ast}$-ideal of the ring of
$\sigma^{\ast}$-polynomials $K\{y_{1},\dots, y_{n}\}^{\ast}$.
Therefore, it provides a method of computation of the Einstein's
strength of a prime system of algebraic partial difference
equations. (The $\sigma^{\ast}$-polynomials of such a system
generate a prime $\sigma^{\ast}$-ideal $P$ of $K\{y_{1},\dots,
y_{n}\}^{\ast}$; as it is explained in \cite[Section 7.7]{Levin4},
the Einstein's strength of the system is expressed by the
$\sigma^{\ast}$-dimension polynomial of $P$.) In short, the
$\sigma^{\ast}$-dimension polynomial determines the number of
parameters in the general solution of the system that can be chosen
arbitrarily (the ``arbitrariness'' of the general solution).
Therefore, if two systems adequately describe a process, one should
prefer to work with a system with the smaller
$\sigma^{\ast}$-dimension polynomial. (Such polynomials are compared
with respect to the natural order: $f(t)\leq g(t)$ if $f(r)\leq
g(r)$ for all sufficiently large $r\in\mathbb{N}$.) In the next part
of the paper, the results of Theorem 3.9, Proposition 3.11 and Theorem
3.12 will be used for the evaluation of the strength of systems of
difference equations that represent finite-difference schemes for
PDEs describing certain chemical processes.

\section{Evaluation of the Einstein's strength of difference schemes
for some reaction-diffusion equations}

{\bf 1.}\, {\bf The diffusion equation in one spatial dimension} for
a constant collective diffusion coefficient $a$ and unknown function
$u(x, t)$ describing the density of the diffusing material at given
position $x$ and time $t$ is as follows:
\begin{equation}{\frac{\partial u(x, t)}{\partial t}} =
c{\frac{\partial^{2}u(x, t)}{\partial x^{2}}}.\end{equation} ($c$ is
a constant). Let us compute the strength of difference equations
that arise from three most common difference schemes for equation
(4.1).

\medskip

\large
\begin{center}
Strength of the forward difference scheme
\end{center}
\normalsize
\medskip

The forward difference scheme for the diffusion
equation (4.1) is based on the standard approximations $\D\frac{u(x + h,
t) - u(x, t)}{h}$ and $\D\frac{u(x, t) - u(x, t+h)}{h}$ for ${\D\frac{\partial u(x, t)}{\partial x}}$ and
${\D\frac{\partial u(x, t)}{\partial t}}$, respectively, with a small step $h$.

We obtain the equation in finite differences
\begin{equation}
u(x, t + h) - u(x, t) = a(u(x + 2h, t) - 2u(x + h, t) + u(x, t)).
\end{equation}
where $a = c/h$. ($\D\frac{\partial^{2}u(x, t)}{\partial x^{2}}$ is replaced with $\D\frac{u(x + 2h, t) - 2u(x + h, t) + u(x, t)}{h^{2}}$).

\medskip

Let $K$ be an inversive difference functional field with basic set
$\sigma = \{\alpha_{1}:f(x, t)\mapsto f(x+h, t),\, \alpha_{2}:f(x,
t)\mapsto f(x, t+h)\}$ ($f(x, t)\in K$) containing $a$ and let
$K\{y\}^{\ast}$ be the ring of $\sigma^{\ast}$-polynomials in one
$\sigma^{\ast}$-indeterminate $y$ over $K$. Treating $y$ as the
unknown function $u(x, t)$ in the equation (4.2), we can write this
equation as
\begin{equation}
a\alpha_{1}^{2}y - 2a\alpha_{1}y - \alpha_{2}y + (a+1)y = 0.
\end{equation}
Let $A$ denote the $\sigma^{\ast}$-polynomial in the left-hand side
of the last equation. Since $A$ is linear, it generates a linear
(and therefore a prime) $\sigma^{\ast}$-ideal $P = [A]^{\ast}$ in
$K\{y\}^{\ast}$.

\medskip

Applying Proposition 3.11, we obtain a characteristic set
$\mathcal{A} = \{A_{1}, A_{2}, A_{3}, A_{4}\}$ of the ideal $P$
where
\begin{align*}
A_{1} = A = a\alpha_{1}^{2}y - 2a\alpha_{1}y - \alpha_{2}y +
(a+1)y,\\ A_{2} = \alpha_{1}^{-1}A = -\alpha_{1}^{-1}\alpha_{2}y +
a\alpha_{1} y + (a+1)\alpha_{1}^{-1}y - 2ay,\\ A_{3} =
\alpha_{1}^{-1}\alpha_{2}^{-1}A = a\alpha_{1}\alpha_{2}^{-1}y
+(a+1)\alpha_{1}^{-1}\alpha_{2}^{-1}y - \alpha_{1}^{-1}y -
2a\alpha_{2}^{-1}y,\\A_{4} = \alpha_{1}^{-2}\alpha_{2}^{-1}A =
(a+1)\alpha_{1}^{-2}\alpha_{2}^{-1}y - \alpha_{1}^{-2}y
-2a\alpha_{1}\alpha_{2}^{-1}y + a\alpha_{2}^{-1}y.
\end{align*}

\smallskip

The leaders of these $\sigma^{\ast}$-polynomials are
$\alpha_{1}^{2}y,\, \alpha_{1}^{-1}\alpha_{2}y,\,
\alpha_{1}\alpha_{2}^{-1}y$, and $\alpha_{1}^{-2}\alpha_{2}^{-1}y$,
respectively (they are written first in the
$\sigma^{\ast}$-polynomials $A_{i}$ above). Therefore, the
$\sigma^{\ast}$-dimension polynomial of equation (4.3) is equal to
the dimension polynomial of the subset $$E = \{(2, 0), (-1, 1), (1,
-1), (-2, -1)\}$$ of $\mathbb{Z}^{2}$. Applying the results of
theorems 2.6 and 2.4 we obtain that the $\sigma^{\ast}$-dimension
polynomial of equation (4.3) that expresses the Einstein's strength
of the forward difference scheme for (4.1) is
$$\psi_{Forw}(t) = 5t.$$ Note that in this case $$V_{E}(r) = \{(1,
y_{1})\,|\, 0\leq y_{1}\leq r-1\}\bigcup\{(0, y_{2})\,|\,-r\leq
y_{2}\leq r\}\bigcup\{(-1, y_{3})\,|\,-(r-1)\leq y_{3}\leq 0\}$$
$\bigcup\{(x, 0)\,|\,-r\leq x\leq -2\}$ for any $r\in\mathbb{N}$
($y_{1}, y_{2}, y_{3}, x\in\mathbb{Z}$).

\bigskip

\large
\begin{center}
Strength of the symmetric difference scheme
\end{center}
\normalsize \bigskip

Consider the symmetric difference scheme for the diffusion equation (4.1)
obtained by replacing the partial derivatives $\D\frac{\partial^{2}
u(x,t)}{\partial x^{2}}$ and $\D\frac{\partial u(x,t)}{\partial t}$
with $\D\frac{u(x + h, t) - 2u(x, t) + u(x-h, t)}{h^{2}}$ and $\D\frac{u(x, t + h) - u(x,
t-h)}{2h}$, respectively. It leads to the equation in finite differences
\begin{equation}
u(x, t+h) - u(x, t-h) = a(u(x+h, t) - 2u(x, t) + u(x-h, t))
\end{equation}
where $a = 2c/h$. As in the case of the forward difference
scheme, let $K$ be an inversive difference functional field with
basic set $\sigma = \{\alpha_{1}:f(x, t)\mapsto f(x+h, t),\,
\alpha_{2}:f(x, t)\mapsto f(x, t+h)\}$ ($f(x, t)\in K$) and let
$K\{y\}^{\ast}$ be the ring of $\sigma^{\ast}$-polynomials in one
$\sigma^{\ast}$-indeterminate $y$ over $K$ ($y$ is treated as the
unknown function $u(x, t)$;  we also assume that $a\in K$). Then the
equation (4.4) can be written as
\begin{equation}
a\alpha_{1}y + a\alpha_{1}^{-1}y - \alpha_{2}y  + \alpha_{2}^{-1}y -
2ay = 0.
\end{equation}
By Proposition 3.11, the characteristic set of the
$\sigma^{\ast}$-ideal generated by the $\sigma^{\ast}$-polynomial $B
= a\alpha_{1}y + a\alpha_{1}^{-1}y - \alpha_{2}y  + \alpha_{2}^{-1}y
- 2ay$ is $\{B, \alpha_{1}^{-1}B\}$. The leaders of $B$ and
$\alpha_{1}^{-1}B$ are $\alpha_{1}y$ and $\alpha_{1}^{-2}y$,
respectively. Now Theorem 3.12 shows that the strength of
the equation (4.5) is expressed by the dimension polynomial
$\phi_{E}(t)$ of the set $E = \{(1, 0), (-2,
0)\}\subseteq\mathbb{Z}^{2}$ (see Theorem 2.5). By Theorem 2.6, this
polynomial coincides with the dimension polynomial $\omega_{E'}(t)$
of the set $$E' = \{(1, 0, 0, 0), (0, 0, 2, 0), (1, 0, 1, 0), (0, 1,
0, 1)\}\subseteq\mathbb{N}^{4}.$$ Applying formula (2.3) we obtain
that the strength of the equation (4.5), which expresses the
symmetric difference scheme for (4.1), is represented by the
$\sigma^{\ast}$-dimension polynomial
$$\psi_{Symm}(t) = 4t.$$

\medskip

\large
\begin{center}
Strength of the Crank-Nicholson scheme
\end{center}
\normalsize
\medskip

The Crank-Nicholson scheme (see \cite[Section 4]{CN}) applied to the
diffusion equation with the above interpretation of the shifts of
arguments as two automorphisms $\alpha_{1}$ and $\alpha_{2}$ gives
the algebraic difference equation of the form
\begin{equation}
\alpha_{1}\alpha_{2}y + a_{1}\alpha_{1}^{-1}\alpha_{2}y +
a_{2}\alpha_{1}y + a_{3}\alpha_{2}y + a_{4}\alpha_{1}^{-1}y +
a_{5}=0
\end{equation}
where $a_{i}$ ($1\leq i\leq 5$) are constants. Applying Proposition
3.11, we obtain that the $\sigma^{\ast}$-polynomial\, $C =
\alpha_{1}\alpha_{2}y + a_{1}\alpha_{1}^{-1}\alpha_{2}y +
a_{2}\alpha_{1}y + a_{3}\alpha_{2}y + a_{4}\alpha_{1}^{-1}y + a_{5}$
in the left-hand side of the last equation generates a prime
$\sigma^{\ast}$-ideal of $K\{y\}^{\ast}$ whose characteristic set
consists of the $\sigma^{\ast}$-polynomials $C,\,
\alpha_{1}^{-1}C,\, \alpha_{2}^{-1}C$, and
$\alpha_{1}^{-1}\alpha_{2}^{-1}C$. Their leaders are
$\alpha_{1}\alpha_{2}y$, $\alpha_{1}^{-2}\alpha_{2}y$,
$\alpha_{1}\alpha_{2}^{-1}y$, and $\alpha_{1}^{-2}\alpha_{2}^{-1}y$,
respectively. Applying theorems 2.6 and 2.4 to the set $\{(1, 1),
(-2, 1), (1, -1), (-2, -1)\}\subseteq\mathbb{Z}^{2}$ we obtain that
the strength of the equation (4.6) is expressed by the dimension
polynomial
$$\psi_{Crank-Nickolson}(t) = 6t-1 .$$ Thus, the symmetric
difference scheme for the diffusion equation has higher strength
(that is, smaller dimension polynomial) than the forward difference
scheme and the Crank-Nicholson scheme, so the symmetric scheme is
the best among these three schemes from the point of view of the
Einstein's strength.

\bigskip

{\bf 2.}\, {\bf Murray, Fisher, Burger and some other quasi-linear
reaction-diffusion equations.}
\smallskip

Proposition 3.11 allows us to compute the strength of
reaction-diffusion equations of the  form
\begin{equation}
{\frac{\partial u}{\partial t}}  - \frac{\partial^{2}u}{\partial
x^{2}} = H\left(u, {\frac{\partial u}{\partial x}}\right)
\end{equation}
where $u=u(x, t)$ is a function of space and time variables $x$ and
$t$, respectively, and $H\left(u, {\D\frac{\partial u}{\partial
x}}\right)$ is a nonlinear function of $u$ and ${\D\frac{\partial
u}{\partial x}}$. Such equations have recently attracted a lot of
attention in the context of chemical kinetics, mathematical biology
and turbulence. The following PDEs, that are particular cases of
equation (4.7), are in the core of the corresponding mathematical
models.

Murray equation (\cite{Cherniha}, equation (4)):

\begin{equation}
\frac{\partial^{2}u}{\partial x^{2}} + \mu_{1}u{\frac{\partial
u}{\partial t}} + \mu_{2}u - \mu_{3}u^{2} = 0,\,\,\, (\mu_{1},
\mu_{2}, \mu_{3}\,\,\,\text{are constants)}.
\end{equation}

Burgers equation (\cite[Section 17.1, (17.3)]{Wazwaz1}):
\begin{equation}
\frac{\partial^{2}u}{\partial x^{2}} - u{\frac{\partial u}{\partial
x}} - {\frac{\partial u}{\partial t}} = 0.
\end{equation}

Fisher equation (\cite[Section 17.1, (17.4)]{Wazwaz1}):
\begin{equation}
\frac{\partial^{2}u}{\partial x^{2}} - u{\frac{\partial u}{\partial
t}} + u(1-u) = 0.
\end{equation}

Huxley equation (\cite[Section 17.1, (17.5)]{Wazwaz1}):
\begin{equation}
\frac{\partial^{2}u}{\partial x^{2}} - u{\frac{\partial u}{\partial
t}} - u(k-u)(u-1) = 0, \,\,\, k\neq 0.
\end{equation}

Burgers-Huxley equation (\cite[Section 17.1, (17.7)]{Wazwaz1}):

\begin{equation}
\frac{\partial^{2}u}{\partial x^{2}} + u{\frac{\partial u}{\partial
x}} - {\frac{\partial u}{\partial t}} + u(k-u)(u-1)= 0,\,\,\, k\neq
0.
\end{equation}

FitzHugh-Nagumo equation (\cite[Section 17.1, (17.8)]{Wazwaz1}):
\begin{equation}
\frac{\partial^{2}u}{\partial x^{2}} + u{\frac{\partial u}{\partial
x}} + u(1-u)(a-u)= 0, \,\,\, a\neq 0.
\end{equation}

The last six equations are of the form
\begin{equation}
\frac{\partial^{2}u}{\partial x^{2}} + (au + b){\frac{\partial
u}{\partial x}} + c{\frac{\partial u}{\partial t}} + F(u) = 0
\end{equation}
where $a, b, c$ are constants ($c\neq 0$, $ab\neq 0$) and $F(u)$ is
a polynomial in one variable $u$ with coefficients in the ground
functional field $K$. Therefore, the forward difference scheme for
equations (4.8) - (4.13) leads to algebraic difference equations of
the form
\begin{equation}
\alpha_{1}^{2}y + (ay+b-2)\alpha_{1}y + c\alpha_{2}y + G(y) = 0.
\end{equation}
(As before, we set $y=u$, denote the automorphisms of the ground
field $f(x, t)\mapsto f(x+1, t)$ and $f(x, t)\mapsto f(x, t+1)$ by
$\alpha_{1}$ and $\alpha_{2}$, respectively, and write the monomials
in the left-hand side of the equation in the decreasing order of
their highest terms. We also set $G(y) = F(y)-ay^{2}-(b+c-1)y$.)

Applying Proposition 3.11 we obtain that the
$\sigma^{\ast}$-polynomial $A = \alpha_{1}^{2}y +
(ay+b-2)\alpha_{1}y + c\alpha_{2}y + G(y)$ generates a prime
$\sigma^{\ast}$-ideal of $K\{y\}^{\ast}$ ($\sigma = \{\alpha_{1},
\alpha_{2}\}$). As in the case of equation (4.3), we obtain that the
characteristic set of the ideal $[A]^{\ast}$ consists of the
$\sigma^{\ast}$-polynomials $A$, $\alpha_{1}^{-1}A$,
$\alpha_{1}^{-1}\alpha_{2}^{-1}A$ and
$\alpha_{1}^{-2}\alpha_{2}^{-1}A$  with leaders $\alpha_{1}^{2}y$,
$\alpha_{1}^{-1}\alpha_{2}y$, $\alpha_{1}\alpha_{2}^{-1}y$ and
$\alpha_{1}^{-2}\alpha_{2}^{-1}y$, respectively. Therefore (as in
the case of equation (4.3)\,) the $\sigma^{\ast}$-dimension
polynomial that expresses the Einstein's strength of the forward
difference scheme for each of the equations (4.8) - (4.13) is equal
to the dimension polynomial of the set $\{(2, 0), (-1, 1), (1, -1),
(-2, -1)\}\subseteq\mathbb{Z}^{2}$, that is, $$\psi_{Forw}(t) =
5t.$$

\medskip

The symmetric difference scheme for equation (4.15) (and therefore
for each of the equations (4.8) - (4.13)\,) gives an algebraic
difference equation of the form

\begin{equation}
(ay+b+1)\alpha_{1}y + (1-ay-b)\alpha_{1}^{-1}y + c\alpha_{2}y -
c\alpha_{2}^{-1}y + F(y) = 0.
\end{equation}
(Recall that we replace $\D\frac{\partial u}{\partial x}$,
$\D\frac{\partial^{2}u}{\partial x^{2}}$ and $\D\frac{\partial
u}{\partial t}$ with $(\alpha_{1} + \alpha_{1}^{-1} -2)u$,
$(\alpha_{1} - \alpha_{1}^{-1})u$ and $(\alpha_{2} -
\alpha_{2}^{-1})u$, respectively.) If we consider the standard
ranking defined after Definition 3.2, then the quasi-linear
$\sigma^{\ast}$-polynomial in the left-hand side of the equation
(4.16) is not of the form considered in Proposition 3.11. However,
if one considers a similar ranking with $\alpha_{2} > \alpha_{1}$,
then the $\sigma^{\ast}$-polynomial $B = (ay+b+1)\alpha_{1}y +
(1-ay-b)\alpha_{1}^{-1}y + c\alpha_{2}y - c\alpha_{2}^{-1}y + F(y)$
in the left-hand side of (4.16) is a quasi-linear one with the
leader $\alpha_{2}y$. By Proposition 3.11, the
$\sigma^{\ast}$-polynomials $B$ and $\alpha_{2}^{-1}B$ form a
characteristic set of the prime $\sigma^{\ast}$-ideal $[B]^{\ast}$
of $K\{y\}^{\ast}$. Since their leaders are, respectively,
$\alpha_{2}y$ and $\alpha_{2}^{-2}y$, the Einstein's strength of the
symmetric difference scheme for each of the equations (4.8) - (4.13)
is expressed by the dimension polynomial $\psi_{Symm}(t)$ of the set
$\{(1, 0), (0, 2)\}\subseteq\mathbb{Z}^{2}$. As in the case of
equation (4.5), we obtain that  $$\psi_{Symm}(t) = 4t.$$

Thus, one should prefer the symmetric scheme to the forward one
while considering the Einstein's strength of these schemes for  PDEs
(4.8) - (4.13).

\bigskip

{\bf 3.}\, {\bf The mathematical model of chemical reaction kinetics
with the diffusion phenomena} is described by a system of partial
differential equations of the form

\begin{equation}
\left \{
\begin{array}{lcl}
{\D\frac{\partial u_{1}}{\partial t}} =  {\D\frac{\partial^{2} u_{1}}{\partial x^{2}}}\\\\
{\D\frac{\partial u_{2}}{\partial t}} =  {\D\frac{\partial^{2} u_{2}}{\partial x^{2}}},\\\\
{\D\frac{\partial u_{1}}{\partial t}} =  {\D\frac{\partial^{2}
u_{3}}{\partial x^{2}}} - k_{1}u_{3}^{2} + k_{1}u_{3}u_{1} +
k_{2}u_{2} -k_{2}u_{3}.
\end{array}\right.
\end{equation}
(see \cite{Lim}).

\medskip

The forward difference scheme leads to the following system of
algebraic difference equations with three
$\sigma^{\ast}$-indeterminates  $y_{1},\, y_{2}$ and $y_{3}$ (they
stand for $u_{1},\, u_{2}$ and $u_{3}$, respectively), where $\sigma
= \{\alpha_{1}:f(x, t)\mapsto f(x+1, t),\, \alpha_{2}:f(x, t)\mapsto
f(x, t+1)\}$ ($f(x, t)$ is an element of the inversive ground
functional field $K$).

\begin{equation}
\left \{
\begin{array}{lcl}
\alpha_{1}^{2}y_{1} - 2\alpha_{1}y_{1} - \alpha_{2}y_{1} + 2y_{1} =
0,\\\\
\alpha_{1}^{2}y_{2} - 2\alpha_{1}y_{2} - \alpha_{2}y_{2} + 2y_{2} =
0,\\\\
\alpha_{1}^{2}y_{3} - 2\alpha_{1}y_{3} - \alpha_{2}y_{3} +
k_{1}y_{1}y_{3} - k_{1}y_{3}^{2} + k_{2}y_{2} - k_{2}y_{3} = 0
\end{array}
\right.
\end{equation}
where $k_{1}, k_{2}, k_{3}$ are constants in $K$.

Let $A_{1}$, $A_{2}$, and $A_{3}$ be the $\sigma^{\ast}$-polynomials
in the left-hand sides of the  first, second and third equations of
the last system, respectively. Combining Proposition 2.4.9 of
\cite{Levin4} (that states that every linear $\sigma^{\ast}$-ideal
in a ring of $\sigma^{\ast}$-polynomials is prime) and our
Proposition 3.11 we obtain that the $\sigma^{\ast}$-ideal $[A_{1},
A_{2}, A_{3}]^{\ast}$ of the ring $K\{y_{1}, y_{2}, y_{3}\}^{\ast}$
is prime. Since $A_{1}$ and $A_{2}$ are linear
$\sigma^{\ast}$-polynomials in different
$\sigma^{\ast}$-indeterminates ($y_{1}$ and $y_{2}$, respectively)
and $A_{3}$ is a quasi-linear $\sigma^{\ast}$-polynomial with
coefficient $1$ of its leader $\alpha_{1}^{2}y_{3}$ (it can be
treated as a quasi-linear $\sigma^{\ast}$-polynomial in $y_{3}$ over
the quotient $\sigma^{\ast}$-field of $K\{y_{1}, y_{2},
y_{3}\}^{\ast}/[A_{1}, A_{2}]^{\ast}$), one can apply Proposition
3.11 to obtain that the twelve $\sigma^{\ast}$-polynomials $A_{i}$,
$\alpha_{1}^{-1}A_{i}$, $\alpha_{1}^{-1}\alpha_{2}^{-1}A_{i}$,
$\alpha_{1}^{-2}\alpha_{2}^{-1}A_{i}$ ($i=1, 2, 3$) form a
characteristic set of $[A_{1}, A_{2}, A_{3}]^{\ast}$ (cf. the
characteristic set of the $\sigma^{\ast}$-ideal generated by the
left-hand side of equation (4.3)\,). Proceeding as in the case of
forward difference scheme of the diffusion equation we obtain that
the strength of the forward difference scheme for system (4.17) is
expressed by the polynomial
$$\psi_{Forw}(t) = 15t.$$

Using the above arguments and the results for difference schemes for
equation (4.1), we obtain that the strengths of the symmetric and
Crank-Nicholson schemes for (4.17) are expressed with the
polynomials
$$\psi_{Symm}(t) = 12t\,\,\,
\text{and}\,\,\, \psi_{Crank-Nicholson}(t) = 18t - 3,$$
respectively. Therefore, in our case, as in the case of equation
(4.1), the symmetric scheme for system (4.17) is characterized by
the smallest $\sigma^{\ast}$-dimension polynomial (and therefore by
the highest Einstein's strength) among these schemes.

\end{document}